\newcommand*{\N}{\ensuremath{\mathbb{N}}}
\newcommand*{\Z}{\ensuremath{\mathbb{Z}}}
\newcommand*{\R}{\ensuremath{\mathbb{R}}}
\newcommand*{\C}{\ensuremath{\mathbb{C}}}
\newtheorem{defi}{Definition}[section]
\newtheorem{lemma}[defi]{Lemma}
\newtheorem{theorem}[defi]{Theorem}
\newtheorem{corollary}[defi]{Corollary}
\newtheorem{remark}[defi]{Remark}
\renewcommand{\d}[1]{\,\mathrm{d}#1 \,}
\renewcommand{\O}{\mathcal{O}}
\newcommand{\ol}[1]{\overline{#1}}
\DeclareMathOperator{\supp}{\mathrm{supp}}
\DeclareMathOperator{\curl}{curl}
\renewcommand{\div}{\mathrm{div}}
\definecolor{lightblue}{rgb}{0.5,0.5,1}
\renewcommand{\Re}{\mathrm{Re}\,}
\renewcommand{\Im}{\mathrm{Im}\,}
\newcommand{\epsr}{\varepsilon_{\mathrm{r}}}
\newcommand{\ov}{\overline}
\newcommand{\pa}{\partial}
\newcommand{\nal}{\nabla_{\alpha}}
\newcommand{\nalT}{\nabla_{\alpha,T}}
\newcommand{\naT}{\nabla_T}
\newcommand{\intO}{\int_{\Omega}}
\begin{document}

\sloppy

\title{On Uniqueness in Electromagnetic Scattering\\ from Biperiodic Structures}
\author{Armin Lechleiter\thanks{Center for Industrial Mathematics, University of Bremen, 28359 Bremen, Germany, \texttt{lechleiter@math.uni-bremen.de}}
\and Dinh-Liem Nguyen\thanks{DEFI, INRIA Saclay--Ile-de-France and Ecole Polytechnique, Palaiseau, France, \texttt{dnguyen@cmap.polytechnique.fr}}} 

\maketitle

 \begin{abstract}
Consider time-harmonic electromagnetic wave scattering from a 
biperiodic dielectric  structure mounted on a perfectly conducting 
plate in three dimensions. Given that uniqueness of solution holds, 
existence of solution follows from a well-known Fredholm framework
for the variational formulation of the problem in a suitable Sobolev space. 
In this paper, we derive a Rellich identity for a solution to this 
variational problem under suitable smoothness conditions on the material 
parameter. Under additional non-trapping assumptions on the material 
parameter, this identity allows us to establish uniqueness of solution 
for all positive wave numbers.
 \end{abstract}

\section{Introduction}

Scattering of electromagnetic waves from periodic structures is not only an 
interesting mathematical topic in its own right but also of great interest in 
applications, e.g., for the construction and optimization of optical filters, 
lenses, and beam-splitters in optics. 
An overview about this and further topics in applied mathematics related to wave  
propagation in periodic structures can be found in, e.g.,~\cite{Bao2001}. 
In this paper we consider scattering of time-harmonic  
electromagnetic waves from a dielectric biperiodic structure mounted on a 
perfectly conducting plate in three dimensions. By biperiodic, we mean that 
the structure is periodic in the, say, $x_1$- and $x_2$-direction, while it 
is bounded in the $x_3$ direction. In contrast to scattering from bounded 
structures, uniqueness of solution for this scattering problem does  in general not 
hold for all positive wave numbers. 
Instead, non-trivial solutions to the homogeneous problem might exist for 
a discrete set of exceptional wave numbers, and these solutions turn out to be 
exponentially localized surface waves. Our study in the present paper focuses on
conditions guaranteeing the well-posedness of the full three-dimensional 
electromagnetic scattering problem mentioned above. We establish non-trapping 
and smoothness conditions on the (non-absorbing) dielectric such that uniqueness 
of solution holds for \emph{all} positive wave numbers. This means that materials 
satisfying the latter conditions cannot guide surface waves. 


\begin{figure}
\centering
\psfrag{x3}{$x_3$}
\psfrag{x1}{$x_2$}
\psfrag{x2}{$x_1$}
\includegraphics[width=8cm]{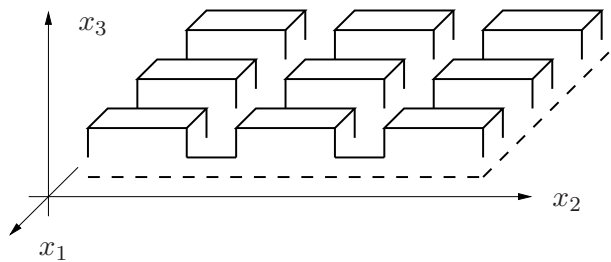}
\caption{Sketch of the biperiodic structure under consideration.}
\end{figure}
Mathematical formulation for the well-posedness of electromagnetic scattering problem 
for periodic structures has been an active area of research in the last years. 
For the scalar case, the authors in~\cite{Alber1979, Wilco1984} studied uniqueness
of solution for all wave numbers (or, equivalently, all frequencies), under geometrical 
conditions on the scatterer, 
for impenetrable structures with Dirichlet and Neumann conditions. Similar results
are obtained in the paper~\cite{Bonne1994} for more complicated periodic structures 
which are constituted of conducting and dielectric materials. The latter paper further 
gave examples of structures for which non-uniqueness of solution occurs at the 
so-called singular wave numbers. These wave numbers were shown to be related  
to guided waves (surface waves) that are exponentially localized along the structure.

For the case of Maxwell's equations, the authors of~\cite{Dobso1992a} studied 
well-posedness of the scattering problem from a medium consisting of two 
homogeneous materials separated by a smooth biperiodic surface using an integral 
equation approach. In~\cite{Dobso1994, Bao1997, Bao2000} 
the authors studied existence and uniqueness of solution for the scattering problem
from penetrable biperiodic structures using a variational approach for the magnetic 
field. Nevertheless, unlike the scalar cases, the uniqueness results in cited cases of 
Maxwell's equations were proven for all but possibly a discrete set of wave numbers.
Furthermore, all the cited papers above considered the non-magnetic case, i.e,
the coefficient magnetic permeability is assumed to be the same constant outside and
inside the structure. The case of variable magnetic permeability were investigated in the 
paper~\cite{Abbou1993} for Maxwell's equations where the biperiodic structure consists of 
conducting and dielectric materials. That paper studied a variational approach, formulated in terms 
of the electric field, and showed that the obtained saddle point problem satisfies the Fredholm 
alternative, and again uniqueness of solution was proven for all but possibly a discrete 
set of wave numbers. More recently, the paper~\cite{Schmi2003} analyzed the well-posedness of 
the scattering problem for penetrable anisotropic biperiodic structures with a restriction on
the non-magnetic case again. The latter paper also proved that the scattering problem is uniquely 
solvable for all wave numbers if the structure contains absorbing materials, and if the 
dielectric tensor is piecewise analytic. Hence, to the best of our knowledge, 
uniqueness results for all wave numbers for the vectorial scattering problem
still remains open if the biperiodic materials is non-absorbing.

The aim of the present work is to prove that the electromagnetic 
scattering problem for non-absorbing biperiodic dielectric structures mounted on a 
perfectly conducting plate is uniquely solvable for all positive wave numbers if
the material parameter satisfies non-trapping and smoothness conditions. 
We formulate the Maxwell's equations variationally in terms of the magnetic 
field in a suitable Sobolev space. We further restrict ourselves to the case 
of non-magnetic and isotropic materials. The variational problem is 
well-known to fit into a Fredholm framework, see, e.g.,~\cite{Dobso1994,Bao1997,Schmi2003}. 
(These papers deal with periodic scattering in the full space, but can be 
adapted to the half-space setting that we consider here.)
As mentioned in the paper~\cite{Bonne1994} on the corresponding scalar scattering 
problems, non-uniqueness phenomena indeed arise at certain singular wave numbers  
if the non-absorbing material parameter 
satisfies suitable trapping conditions. In this paper we show a converse result for the full 
three-dimensional periodic Maxwell equations: uniqueness of solution holds 
for all positive wave numbers if the material parameter is non-absorbing and satisfies 
suitable non-trapping and smoothness conditions. 
To prove the uniqueness result we derive a so-called Rellich identity for a 
solution to the homogeneous variational problem. The solution estimates resulting from this 
integral identity allow us to show that the homogeneous variational problem 
has only the trivial solution for \emph{all} positive wave numbers.

Our analysis extends the approach in~\cite{Hadda2011} that was motivated by an existence 
and uniqueness proof for solutions to rough surface scattering problems via Rellich identities in~\cite{Chand2005}.
For scalar periodic problems, a related technique has been used in~\cite{Bonne1994}.
The paper~\cite{Hadda2011} studied electromagnetic scattering from rough, unbounded 
penetrable layers. Such scattering problems are considered to be more complicated 
than those for periodic structures since the problem to find the scattered field 
cannot reduced, e.g., to a bounded domain. The applications of rough scattering problems 
include for instance outdoor noise propagation, oceanography or even optical technologies 
when the dielectric lacks periodicity.  
The authors in~\cite{Hadda2011} formulated the latter scattering problem in terms of the 
electric field. We will instead choose a formulation in terms of the magnetic field, which somewhat 
changes the role of the dielectric material parameter in the integral identities since the material is non-magnetic. 
The paper~\cite{Hadda2011}  establishes existence and uniqueness of solution under non-trapping 
and smoothness conditions on the material parameter. While a priori estimates 
resulting from the Rellich identity allowed the authors in~\cite{Hadda2011} to deduce uniqueness 
of solution, existence of solution has been obtained using a limiting absorption argument.
The approach studied in the present paper is, from the technical point of view, somewhat similar 
to the one introduced in~\cite{Hadda2011}. However, the analysis of the biperiodic case is 
definitely simpler since uniqueness of solution directly implies existence. Therefore, one only
needs to investigate the Rellich identity and estimates for solutions to the homogeneous problem.
It turns out also that this procedure produces weaker assumptions on the material parameter than 
those found in~\cite{Hadda2011}. More precisely, uniqueness and existence of
solution for all wave numbers are obtained under the following (non-trapping and smoothness)
assumptions on the biperiodic relative material parameter $\epsr: \, \R^3_+ := \{ x \in \R^3, \, x_3 > 0 \} \to \R$.
First, we assume that $\epsr^{-1} \in L^{\infty}(\R^3_+)$ equals one in $\{ x_3 > h \}$ for some $h>0$ 
and possesses essentially bounded and measurable first weak derivatives. Second, we require that 
\begin{align*}
& (a)\quad  \frac{\pa\epsr^{-1}}{\pa x_3} \leq 0 \text{ in } \R^3_+,\\
& (b) \quad \text{It holds that } \frac{\pa\epsr^{-1}}{\pa x_3} < 0 \text{ in some non-empty open subset of } \R^3_+,\\
& (c) \quad\text{There exists } \delta>1/2 \text{ such that } \frac{\delta}{2} \|\naT \epsr^{-1}\|_{L^{\infty}(\R^3_+)^3}^2 + \frac{\sqrt{2}}{h}\bigg\| \frac{\pa \epsr^{-1}}{\pa x_3} \bigg\|_{L^\infty(\R^3_+)} < \frac{2}{h^2},
\end{align*}
where $\nabla_T \epsr^{-1} := (\pa \epsr^{-1}/\pa x_1,\pa \epsr^{-1}/\pa x_2,0)^{\top}$. 
Under  these conditions, the existence of surface waves is  automatically ruled out. 
While conditions (a) and (c) are similar to conditions (a) and (d) in~\cite[Eq.~(7.2)]{Hadda2011}, 
condition (b) is weaker and clearly simpler than the corresponding conditions (b) and (c) in~\cite[Eq.~(7.2)]{Hadda2011}.
%

The half-space setting that we consider  in this paper is somewhat special, and it 
seems worth to mention that the Rellich identity itself generalizes to a corresponding 
periodic scattering problem in full space. The resulting estimate for a solution 
$H$ to the scattering problem has a similar structure to the estimate in 
Lemma~\ref{th:estimatelemma}. However, in the half-space setting, the term 
$2\Re\int_{\Omega} (\pa \epsr^{-1} / \pa x_3) \, (\pa H_3 / \pa x_3)\ol{H_3}  \d{x}  $
can be treated without integration by parts using a Poincar\'e lemma. In contrast,
in the full-space setting the only obvious way of treating this term is to 
integrate by parts. Since we seek for solution estimates, this introduces the 
condition that $x_3 \mapsto \epsr^{-1}(x_1, x_2, x_3)$ needs to be concave 
to conclude. Since this is a somewhat unnatural condition, we do not present 
this result in more detail. 

One can further generalize the results presented here to certain anisotropic structures. 
However, already for the simpler case of isotropic coefficients the derivation of the Rellich identity 
is a technical matter. Again, we have opted to try to keep the presentation simple instead 
of treating the most general setting that could be considered. 

The paper is organized as follows: 
In Section 2 we present setting of the problem. 
Section 3 is dedicated to a variational formulation and to the Fredholm property of the latter. 
Section 4 contains a couple of technical lemmas. 
We derive the integral inequalities resulting from the Rellich identity in Section 5. 
Finally, the uniqueness of the variational problem for all wave numbers is proven in Section 6.

Notation: We denote by $H^s(\R^d)^3$, $d=2,3$, the usual $L^2$-based Sobolev 
space of vector-valued functions in $\R^d$. Moreover, 
$H^s_{\text{loc}}(\R^3)^3 = \{v \in H^s(B)^3 \text{ for all balls } B \subset \R^3\}$, 
and $W^{1,\infty}(\R^3) = \{ v \in L^\infty(\R^3): \, \nabla v \in L^\infty(\R^3)^3 \}$.

\section{Problem Setting}
We consider scattering of time-harmonic electromagnetic waves from a biperiodic structure which models a 
dielectric layer mounted on a perfectly conducting plate. The electric field $E$ and the magnetic field $H$ 
are governed by the time-harmonic Maxwell equations at frequency $\omega>0$ in $\R^3_+= \{(x_1,x_2,x_3) \in \R^3: x_3 >0 \}$,
\begin{align}
\label{eq:maxwellEquation1}
 \curl H + i\omega\varepsilon E & = 0\quad \text{ in } \R^3_+,\\
\label{eq:maxwellEquation2}
 \quad \curl E - i\omega\mu H & = 0\quad \text{ in } \R^3_+,\\
\label{eq:maxwellEquation3}
 e_3\times E & = 0 \quad \text{ on } \{x_3 = 0\}, 
\end{align}
where $e_3 = (0,0,1)^\top$.  
The electric permittivity $\varepsilon$ is a bounded 
measurable function that is $2\pi$-periodic in $x_1$ and $x_2$. Further, we 
assume that $\varepsilon$ equals $\varepsilon_0>0$ outside the biperiodic structure, 
that is, for $x_3 \geq h$ where $h>0$ is chosen larger than $\sup \{ x_3: \, (x_1,x_2,x_3)^\top \in \supp(\varepsilon -\varepsilon_0) \}$.
The magnetic permeability $\mu=\mu_0$ is assumed to be a positive constant 
and the conductivity  is assumed to vanish. As usual, the 
problem~\eqref{eq:maxwellEquation1}-\eqref{eq:maxwellEquation3} 
has to be completed by a radiation condition that we set up using Fourier series. 

The biperiodic structure is illuminated by an electromagnetic plane wave with wave vector 
$d = (d_1,d_2,d_3) \in \R^3$, $d_3 <0$, such that  $d\cdot d = \omega^2\varepsilon_0 \mu_0$. 
The polarizations $p,q \in \R^3$ of the incident wave satisfy $p\cdot d = 0$ and 
$q = 1 / (\omega\varepsilon_0) (p\times d)$. With these definitions, the incident plane 
waves $E^i$ and $H^i$ are given by 
\[
 E^i := q e^{id\cdot x} , \quad H^i := p e^{id\cdot x}, \quad x \in \R^3_+.
\]
In the following we will exploit that one can explicitly compute the 
corresponding reflected field at $\{ x_3 = 0 \}$. To this end, we introduce 
the notation $\tilde{a} = (a_1, a_2, - a_3)^\top$ for $a=(a_1,a_2,a_3)^\top \in \R^3$. 
The reflected waves at the plane $\{ x_3 = 0 \}$ are 
\[
  E^r(x) := - \tilde{q} e^{i \tilde{d} \cdot x}, \quad 
  H^r(x) :=  \tilde{p} e^{i \tilde{d} \cdot x}, \quad x \in \R^3_+,  
\]
since $\div E^r = 0$, $\div H^r = 0$, and 
$e_3 \times (E^i + E^r) = 0$, $e_3 \cdot (H^i + H^r) = 0$ 
on $\{ x_3 = 0\}$. From now on, we denote the sum of the incident 
and reflected plane waves by 
\[
  E^{ir} :=  E^i + E^r \quad \text{and} \quad 
  H^{ir} := H^i + H^r.
\]
Set 
\[
  \alpha = (\alpha_1,\alpha_2,\alpha_3)^\top := (d_1,d_2,0)^\top
\]
and define $E^{ir}_{\alpha}$ and $H^{ir}_{\alpha}$ by
\[
  E^{ir}_{\alpha} := e^{-i \alpha \cdot x}E^{ir}(x), \quad 
  H^{ir}_{\alpha} := e^{-i \alpha \cdot x} H^{ir}(x), \quad x \in \R^3_+,
\]
such that $E^{ir}_{\alpha}$ and $H^{ir}_{\alpha}$ are $2\pi$-periodic 
in $x_1$ and $x_2$. If we apply the same phase shift to solutions 
$E$ and $H$ of the Maxwell
equations~\eqref{eq:maxwellEquation1}-\eqref{eq:maxwellEquation3},
\[
 E_{\alpha} = e^{-i \alpha \cdot x}E(x), \quad H_{\alpha} = e^{-i \alpha \cdot x}H(x),
\]
and if we denote  
\[
\nabla_\alpha f = \nabla f + i \alpha f, \quad 
\curl_{\alpha} F = \curl F + i\alpha \times F, \quad 
\div_{\alpha}F = \div F + i\alpha \cdot F
\]
for scalar functions $f$ and vector fields $F$, 
then $E_{\alpha}$ and $H_{\alpha}$ satisfy
\begin{align}
\label{eq:PmaxwellEquation1}
 \curl_{\alpha} H_{\alpha} + i\omega \varepsilon E_{\alpha} & = 0 \quad \text{ in } \R^3_+,\\
\label{eq:PmaxwellEquation2}
 \curl_{\alpha} E_{\alpha} - i\omega\mu_0 H_{\alpha} & = 0 \quad \text{ in } \R^3_+, \\
\label{eq:PmaxwellEquation3}
 e_3 \times E_{\alpha} & = 0 \quad \text{ on } \{x_3 = 0\}.
\end{align}
Note that we still have $\div_{\alpha}\curl_{\alpha} = 0$ and 
$\curl_{\alpha}\nal = 0$. Let us denote the relative material parameter by 
\[
  \epsr:=\frac{\varepsilon}{\varepsilon_0}. 
\]
Obviously, $\epsr$ equals one outside the biperiodic dielectric structure. 
Recall that the magnetic permeability $\mu_0$ is constant which motivates
us to work with the divergence-free magnetic field, that is, $\div_{\alpha}H_{\alpha} = 0$. 

Note that~\eqref{eq:PmaxwellEquation1} plugged in 
into~\eqref{eq:PmaxwellEquation3} implies that 
$e_3 \times (\epsr^{-1} \curl_\alpha H_\alpha) = 0$ 
on $\{x_3 = 0 \}$ and that the condition $e_3 \cdot H_{\alpha} = 0$ on $\{x_3 = 0 \}$
can be derived by plugging~\eqref{eq:PmaxwellEquation3} into~\eqref{eq:PmaxwellEquation2}.
Hence, introducing the wave number 
$k= \omega (\epsilon_0 \mu_0)^{1/2}$, 
and eliminating the electric field $E_{\alpha}$ 
from~\eqref{eq:PmaxwellEquation1}-\eqref{eq:PmaxwellEquation3}, 
we find that 
\begin{align}
\label{eq:Order2Total}
  \curl_{\alpha} \big(\epsr^{-1} \curl_{\alpha} H_{\alpha}\big) - k^2  H_{\alpha} & = 0\quad \text{ in } \R^3_+, \\
  e_3  \times (\epsr^{-1} \curl_\alpha H_{\alpha}) & = 0 \quad \text{ on } \{x_3 = 0\}, \\
  e_3 \cdot H_{\alpha} & = 0 \quad \text{ on } \{x_3 = 0\}.
\end{align}
We wish to reformulate the last three equations in terms of the 
scattered field $H^s_{\alpha}$, defined by 
$H^s_{\alpha}:= H_{\alpha} - H^{ir}_{\alpha}$. 
Since, by construction, 
$\curl_{\alpha}\curl_{\alpha} H^{ir}_{\alpha} - k^2  H^{ir}_{\alpha} = 0$  
in  $\R^3_+$, $H^{ir}_\alpha \cdot e_3 = 0$ and 
$e_3 \times (\epsr^{-1} \curl_\alpha H^{ir}_\alpha) = 0$ on 
$\{ x_3 = 0 \}$, a simple computation shows that 
\begin{equation}
\label{eq:secondOrder}
\begin{split}
 \curl_{\alpha} \big(\epsr^{-1} \curl_{\alpha} H^s_{\alpha}\big) - k^2  H^s_{\alpha} & 
   = -\curl_{\alpha} \big( (\epsr^{-1}-1) \curl_{\alpha} H^{ir}_{\alpha}\big)\quad \text{ in } \R^3_+, \\
 e_3  \times (\epsr^{-1} \curl_\alpha H_{\alpha}^s) & = 0 \quad \text{ on } \{x_3 = 0\}, \\
 e_3 \cdot H_{\alpha}^s & = 0  \quad \text{ on } \{x_3 = 0 \} .
\end{split}
\end{equation}
Due to the biperiodicity of the right-hand side and of $\epsr$, 
we seek for a biperiodic solution $H^s_\alpha$, and reduce the 
problem to the domain $(0,2\pi)^2\times(0,\infty)$. 
%
We complement this boundary value problem by a radiation condition, 
see also in~\cite{Dobso1994,Bao2000}, that we set up using Fourier series. 
The scattered field $H^s_\alpha$ is $2\pi$-periodic 
in $x_1$ and $x_2$ and can hence be expanded as 
\begin{equation}
\label{eq:FourierH}
  H^s_\alpha(x) = \sum_{n \in \Lambda} \hat{H}_n(x_3) e^{i n \cdot x}, 
  \quad x = (x_1,x_2,x_3)^\top \in \R^3_+, \ \Lambda=\Z^2 \times \{0\},
\end{equation}
where the Fourier coefficients $\hat{H}_n(x_3)$ are defined by
\begin{equation}
\label{eq:Fourier}
 \hat{H}_n(x_3)= \frac{1}{4 \pi^2} \int_{0}^{2\pi}\int_{0}^{2\pi} H^s_\alpha(x_1,x_2,x_3) e^{-i n \cdot x} \d{x_1} \d{x_2}, 
 \quad n \in \Lambda. 
\end{equation}
Define 
\[
  \beta_n:= \begin{cases} 
              \sqrt{k^2 - |n+\alpha|^2}, & k^2 \geq |n+\alpha|^2, \\ 
              i \sqrt{|n+\alpha|^2- k^2}, & k^2 < |n+\alpha|^2,
            \end{cases}
  \quad n \in \Lambda.
\] 
Since $\epsr^{-1}$ equals one for $x_3>h$ it holds that 
$\div_{\alpha} H^s_\alpha$ vanishes for $x_3>h$,
and equation~\eqref{eq:secondOrder} becomes
$(\Delta_{\alpha} + k^2)  H^s_\alpha = 0$ in $\{ x_3 >h \}$, 
where $\Delta_{\alpha} = \Delta + 2i \alpha\cdot\nabla - |\alpha|^2$. 
Using separation of variables, and choosing the upward propagating 
solution, we set up a radiation condition in form of a Rayleigh 
expansion condition, prescribing that $H^s_\alpha$ can be written as 
\begin{equation}
\label{eq:radiationCondition}
  H^s_\alpha(x) = \sum_{n\in \Lambda} \hat{H}_n e^{i \beta_n (x_3-h) + in\cdot x}
  \quad \text{ for } \{ x_3 >h \}, \quad 
  \text{where }\hat{H}_n := \hat{H}_n(h),
\end{equation}
and that the series converges uniformly in compact subsets of 
$\{ x_3 >h \}$. 

The scattering problem to find a scattered field $H^s_{\alpha}$ 
that satisfies the boundary value problem~\eqref{eq:secondOrder} 
and the expansion~\eqref{eq:radiationCondition} is in the 
following section reformulated variationally in a suitable Sobolev 
space. 

\section{Variational Formulation}

We solve the scattering problem presented in the last section 
variationally, and briefly recall in this section a variational 
formulation of the problem in a suitable Sobolev space. Our framework 
is an adaption of the results from~\cite{Schmi2003} to our half-space setting.
In contrast to the variational formulation in $H(\curl)$ 
in~\cite{Abbou1993}, the papers~\cite{Dobso1994, Bao1997, Bao2000, Schmi2003}
set up a variational formulation in $H^1$ for the magnetic field. Indeed, 
since the latter is divergence-free, any solution that is locally $H(\curl)$ indeed 
belongs locally to $H^1$. For our purposes, the $H^1$ formulation has the 
additional advantage that it is well-defined at Rayleigh-Wood frequencies, 
as it was noted in~\cite{Schmi2003}. 
\begin{figure}
\centering
\psfrag{x3h}{}
\psfrag{x30}{}
\psfrag{0}{0}
\psfrag{2pi}{$2\pi$}
\psfrag{B2}{$\Gamma_0$}
\psfrag{D}{}
\psfrag{x3}{$x_3$}
\psfrag{x1x2}{$(x_1,x_2)$}
\psfrag{gammah}{$\Gamma_h$}
\psfrag{omega}{$\Omega$}
\includegraphics[width=8cm]{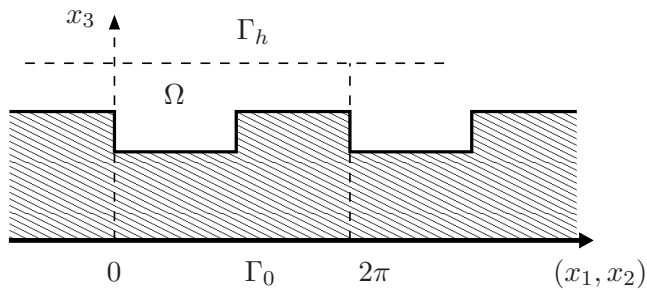}
\caption{Geometric setting for electromagnetic scattering problem from a biperiodic dielectric 
structure mounted on a perfectly conducting plate (in two dimensions, for simplicity).}
\end{figure}
We define a bounded domain
\[
 \Omega=(0,2\pi)^2\times(0,h) \qquad 
 \text{for } h > \sup \{ x_3 : \, (x_1,x_2,x_3)^\top \in \supp(\epsr-1) \},
\]
with  boundaries $\Gamma_0 := (0,2\pi)^2\times\{0\}$  and 
$\Gamma_h := (0,2\pi)^2\times\{h\}$, and Sobolev spaces 
\begin{align*}
  H^\ell_{\mathrm{p}}(\Omega)^3 & := \{ F \in H^\ell(\Omega)^3: F = \tilde{F}|_{\Omega}\text{ for some } 2\pi \text{-biperiodic } \tilde{F}\in H^\ell_{\text{loc}}(\R^3)^3\}, \quad \ell \in \N,\\
  H^1_{\mathrm{p}, \mathrm{T}}(\Omega)^3 & := \{ F=(F_1,F_2,F_3)^\top \in H^1_{\mathrm{p}}(\Omega)^3: \, F_3 = 0 \text{ on } \Gamma_0 \},
\end{align*}
equipped with the usual integral norm, e.g.,
\[
  \| F \|^2_{H^1_{\mathrm{p}}(\Omega)^3} = \| F \|^2_{L^2(\Omega)^3} + \| \nabla_\alpha F \|^2_{L^2(\Omega)^3}.
\]
The space $H^1_{\mathrm{p}, \mathrm{T}}(\Omega)^3$ of periodic vector fields that are tangential 
on $\Gamma_0$ is well-defined due to the standard trace theorem in $H^1$.
We also define periodic Sobolev spaces of functions with $d=1,2,3$ components 
on $\Gamma_h$: for $s \in \R$, 
\[
  H^s_{\mathrm{p}}(\Gamma_h)^d 
  := \{ F \in H^s(\Gamma_h)^d: \, F = \tilde{F}|_{\Gamma_h} \text{ for some } 2\pi \text{-biperiodic } \tilde{F} \in H^s_{\text{loc}}( \{ x_3 = h \})^d\}.
\]
A periodic vector field $F \in H^s(\Gamma_h)^d$ can be developed in a 
Fourier series, $F(x) = \sum_{n \in \Lambda} \hat{F}_n \exp(i n \cdot x)$, 
and $\| F \|_{H^s_{\mathrm{p}}(\Gamma_h)^d} 
   = ( \sum_{n \in \Lambda} (1+ n^2)^{s} |\hat{F}_n|^2 )^{1/2}$
defines a norm on $H^s_{\mathrm{p}}(\Gamma_h)^d$. 

We define a non-local boundary operator $T_\alpha$ (the exterior Dirichlet-Neumann operator) by
\[
  (T_\alpha f)(x) = \sum_{n \in \Lambda} i \beta_n \hat{f}_n e^{i n \cdot x},
  \quad \text{for }
  f  = \sum_{n \in \Lambda} \hat{f}_n \exp(i n \cdot x) \in H^{1/2}_{\mathrm{p}}(\Gamma_h).
\]
It is a classical result that $T_\alpha $ is bounded from $H_{\mathrm{p}}^{{1/2}}(\Gamma_h)$ 
into $H_{\mathrm{p}}^{-{1/2}}(\Gamma_h)$, see, e.g.,~\cite{Arens2010}.
Using $T_\alpha$, we define a vector of (pseudo-)differential 
operators  $R_\alpha :=  (\partial^{\alpha}/\pa x_1, \partial^{\alpha}/\pa x_2, T_\alpha )$.  
For a vector field $F \in H_{\mathrm{p}}^{{1/2}}(\Gamma_h)^3$,   
\[
  R_\alpha \times F =  (\partial^{\alpha}/\pa x_1, \partial^{\alpha}/\pa x_2, T_\alpha ) \times F, 
  \quad 
  R_\alpha \cdot F =  (\partial^{\alpha}/\pa x_1, \partial^{\alpha}/\pa x_2, T_\alpha ) \cdot F.
\]
Since all components of $R_\alpha$ are bounded operators from $H_{\mathrm{p}}^{{1/2}}(\Gamma_h)$ 
into $H_{\mathrm{p}}^{-{1/2}}(\Gamma_h)$,  the operator $F \mapsto R_\alpha \times F$ is 
bounded from $H^{1/2}_{\mathrm{p}}(\Gamma_h)^3$ into $H^{-1/2}_{\mathrm{p}}(\Gamma_h)^3$, and 
$F \mapsto R_\alpha \cdot F$ is bounded from $H^{1/2}_{\mathrm{p}}(\Gamma_h)^3$ into 
$H^{-1/2}_{\mathrm{p}}(\Gamma_h)$. If a biperiodic function $H \in H^1_{\mathrm{loc}}(\R^3_+)$ satisfies 
the Rayleigh expansion condition, then $T_\alpha  H_3 = \partial H_3 / \partial x_3$ on $\Gamma_h$. 
This implies that $e_3 \times (\curl_\alpha H) = e_3 \times (R_\alpha \times H)$ on $\Gamma_h$ 
(see, e.g.,~\cite{Schmi2003}).

Assume that $H^s_\alpha$ is a distributional periodic solution 
to the boundary value problem~\eqref{eq:secondOrder} such that 
$H^s_\alpha$, $\curl_\alpha H^s_\alpha$, and $\div_\alpha H^s_\alpha$
are locally square-integrable, such that the radiation 
condition~\eqref{eq:radiationCondition} is satisfied, 
and such that $\nu \cdot (H^s_\alpha+H^{ir}_\alpha)$ and 
$\nu \times (\epsr^{-1} \curl (H^s_\alpha+H^{ir}_\alpha))$ are 
continuous over interfaces with normal vector $\nu$ where $\epsr$ jumps. 
As noted in~\cite{Schmi2003}, this implies that, following the above 
notation, $H^s_\alpha \in H^1_{\mathrm{p},\mathrm{T}}(\Omega)$.
Then the Stokes formula~\cite{Abbou1993, Schmi2003} implies that 
\begin{align*}
  & \int_{\Omega}(\epsr^{-1}\curl_{\alpha} H^s_\alpha \cdot \overline{\curl_{\alpha} F} 
    - k^2  H^s_\alpha \cdot \overline{F} )\d{x} \\
  & - \int_{\Gamma_0} e_3 \times (\epsr^{-1} \curl_\alpha H^s_\alpha) \cdot \overline{F}\d{x} 
  + \int_{\Gamma_h} e_3 \times (R_\alpha\times H^s_\alpha) \cdot \overline{F}\d{s} \\ 
  & = \int_{\Omega} (1-\epsr^{-1}) \curl_\alpha H^{ir}_\alpha \cdot \overline{\curl_{\alpha} F}\d{x} 
  - \int_{\Gamma_0} (e_3 \times (1-\epsr^{-1}) \curl_\alpha H^{ir}_\alpha) \cdot \overline{F}\d{x} 
\end{align*}  
for all test functions $F \in H^1_{\mathrm{p}, \mathrm{T}}(\Omega)^3$. 
Since we assumed that 
\[
  0 = e_3 \times (\epsr^{-1} \curl_\alpha H_\alpha)
  = e_3 \times (\epsr^{-1} \curl_\alpha (H^s_\alpha+H^{ir}_\alpha)) \quad \text{on } \Gamma_0,
\]
the above identity simplifies to 
\begin{multline*}
  \int_{\Omega}(\epsr^{-1}\curl_{\alpha} H^s_\alpha \cdot \overline{\curl_{\alpha} F} 
    - k^2 H^s_\alpha \cdot \overline{F} )\d{x}
  + \int_{\Gamma_h} e_3 \times (R_\alpha\times H^s_\alpha) \cdot \overline{F}\d{s}  \\
  = \int_{\Omega} (1-\epsr^{-1}) \curl_\alpha H^{ir}_\alpha \cdot \overline{\curl_{\alpha} F}\d{x} 
  - \int_{\Gamma_0} (e_3 \times \curl_\alpha H^{ir}_\alpha) \cdot \overline{F}\d{x}.
\end{multline*}  
By construction, $e_3 \times \curl_\alpha H^{ir}_\alpha$ vanishes on $\Gamma_0$, 
that is, we can neglect the last term in the last equation.
The divergence constraint $\div_\alpha H^s_\alpha = 0$ that follows 
from~\eqref{eq:secondOrder} shows that
\begin{align}
  \mathcal{B}(H^s_\alpha,F) & := 
  \int_{\Omega}(\epsr^{-1}\curl_{\alpha} H^s_\alpha \cdot \overline{\curl_{\alpha} F} 
                - k^2 H^s_\alpha \cdot \overline{F} )\d{x}
  + \rho\int_{\Omega} (\div_{\alpha} H^s_\alpha)(\overline{\div_{\alpha} F})\d{x} \nonumber \\ 
  & \quad + \int_{\Gamma_h} e_3 \times (R_\alpha\times H^s_\alpha) \cdot \overline{F}\d{s} 
    - \int_{\Gamma_h} (R_\alpha\cdot H^s_\alpha) (e_3 \cdot \overline{F}) \d{s} \nonumber \\
  & = \int_{\Omega} (1-\epsr^{-1}) \curl_\alpha H^{ir}_\alpha \cdot \overline{\curl_{\alpha} F}\d{x},   \label{eq:prepVariational}
\end{align}  
where $\rho$ is some complex constant with $\Re(\rho) \geq c >0$ and $\Im(\rho)<0$.  

We next prove that the bounded sesquilinear form 
$\mathcal{B}: \, H^1_{\mathrm{p}, \mathrm{T}}(\Omega)^3 \times H^1_{\mathrm{p}, \mathrm{T}}(\Omega)^3 \to \C$ 
satisfies a G\r{a}rding inequality (this goes back to~\cite{Abbou1993}), i.e. there exist strictly positive 
constants $c_1$ and $c_2$ such that
\begin{align}
\label{eq:Garding}
\Re(\mathcal{B}(H,H)) \geq c_1\int_{\Omega}|\nabla_{\alpha}H|^2\d{x}  - c_2\int_\Omega |H|^2\d x.
\end{align}
for all $H\in H^1_{\mathrm{p}, \mathrm{T}}(\Omega)^3$.
\begin{theorem}
 \label{th:Fredholm}
Assume that $\epsr^{-1} \in L^\infty(\Omega)$ is positive and bounded away 
from zero. Set $\Re \rho = \inf_{\Omega} \epsr^{-1} >0$ and choose 
$\Im \rho <0$. Then $\mathcal{B}$ satisfies~\eqref{eq:Garding}. 
\end{theorem}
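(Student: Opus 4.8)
The plan is to test $\mathcal{B}$ against $F=H$, take the real part, and establish coercivity modulo the $L^2(\Omega)$--norm. Since $\int_\Omega|\div_{\alpha} H|^2\,dx$ and $\int_\Omega|\curl_{\alpha} H|^2\,dx$ are real and nonnegative and $\rho$ is constant, the real part of the three volume terms equals $\int_\Omega\epsr^{-1}|\curl_{\alpha} H|^2\,dx+c\int_\Omega|\div_{\alpha} H|^2\,dx-k^2\int_\Omega|H|^2\,dx$, where $c:=\Re\rho=\inf_\Omega\epsr^{-1}>0$ (note that $\Im\rho$ plays no role here). Putting $t:=\min\{c,1\}>0$ and using $\epsr^{-1}\ge c\ge t$ pointwise, I would split off the coercive piece $t\big(\|\curl_{\alpha} H\|_{L^2(\Omega)}^2+\|\div_{\alpha} H\|_{L^2(\Omega)}^2\big)$ and simply discard the nonnegative remainder $\int_\Omega(\epsr^{-1}-t)|\curl_{\alpha} H|^2\,dx+(c-t)\int_\Omega|\div_{\alpha} H|^2\,dx$.

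Next I would apply a Green-type vector identity for the shifted operators to rewrite $t\big(\|\curl_{\alpha} H\|_{L^2(\Omega)}^2+\|\div_{\alpha} H\|_{L^2(\Omega)}^2\big)$ as $t\|\nabla_{\alpha} H\|_{L^2(\Omega)}^2$ plus a boundary integral over $\partial\Omega$. Biperiodicity cancels the lateral contributions, and the defining condition $H_3=0$ on $\Gamma_0$ of the space $H^1_{\mathrm{p},\mathrm{T}}(\Omega)^3$ makes the integral over $\Gamma_0$ vanish, so only a term over $\Gamma_h$ involving the tangential derivatives $\partial^{\alpha}/\partial x_1,\partial^{\alpha}/\partial x_2$ of $H$ survives. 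Expanding $H|_{\Gamma_h}=\sum_{n\in\Lambda}\hat H_n e^{in\cdot x}$ in Fourier modes, this remaining term is then compared with the two explicit $\Gamma_h$--integrals in $\mathcal{B}$, which involve $R_\alpha$ and hence $T_\alpha$. A direct computation shows that the tangential parts of the two contributions coincide, so after combining them only $(1-t)$ times an indefinite tangential form is left, while $T_\alpha$ additionally produces the nonnegative term $\sum_{n\in\Lambda}\Im(\beta_n)|\hat H_n|^2$ (recall $\Im\beta_n=\sqrt{|n+\alpha|^2-k^2}\ge0$ for evanescent modes, and $\beta_n$ is real for the finitely many propagating ones).

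Controlling this indefinite tangential boundary form is the main obstacle, since it scales like $\|H\|_{H^{1/2}(\Gamma_h)}^2$ and is hence not dominated by $\|H\|_{L^2(\Omega)}^2$. Here I would argue mode by mode: Cauchy--Schwarz and Young's inequality bound the indefinite contribution of mode $n$ by $(1-t)|n+\alpha|\,|\hat H_n|^2$, whereas the good term contributes $\sqrt{|n+\alpha|^2-k^2}\,|\hat H_n|^2$. Because $t\le1$, for every $|n+\alpha|\ge M:=k/\sqrt{t(2-t)}$ the good term dominates and the net per-mode contribution is nonnegative; only the finitely many modes with $|n+\alpha|<M$ survive, and Parseval bounds them from below by $-CM\|H\|_{L^2(\Gamma_h)}^2$.

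Finally I would absorb this finite-rank remainder using the multiplicative trace inequality $\|H\|_{L^2(\Gamma_h)}^2\le\varepsilon\|\nabla H\|_{L^2(\Omega)}^2+C_\varepsilon\|H\|_{L^2(\Omega)}^2$, after replacing $\nabla$ by $\nabla_{\alpha}$ at the cost of a further $L^2(\Omega)$--term. Choosing $\varepsilon$ so small that the gradient contribution does not exceed $t/2$, I arrive at $\Re(\mathcal{B}(H,H))\ge\tfrac{t}{2}\|\nabla_{\alpha} H\|_{L^2(\Omega)}^2-c_2\|H\|_{L^2(\Omega)}^2$, which is~\eqref{eq:Garding} with $c_1=t/2$ and a suitable $c_2$. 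The two delicate points are the exact cancellation of the tangential boundary terms and the spectral sign of $T_\alpha$; the rest is routine.
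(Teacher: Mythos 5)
Your proposal is correct, and its skeleton is the same as the paper's: test with $F=H$, take real parts, use the integration-by-parts identity $\int_\Omega(|\curl_{\alpha} H|^2+|\div_{\alpha} H|^2)\d{x} = \int_\Omega|\nabla_\alpha H|^2\d{x} + 2\Re\int_{\Gamma_h}\overline{H_3}\,(\pa^{\alpha}H_1/\pa x_1+\pa^{\alpha}H_2/\pa x_2)\d{s}$ (valid by periodicity and $H_3=0$ on $\Gamma_0$), and then control the $\Gamma_h$ terms by a Fourier-mode analysis in which $\Im(\beta_n)\ge 0$ supplies the good sign and only finitely many low modes cause trouble. The differences are in execution. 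First, the paper outsources both the initial lower bound and the Fourier argument to \cite{Schmi2003}, whereas you carry them out explicitly, including the threshold $M=k/\sqrt{t(2-t)}$ beyond which $\Im(\beta_n)=\sqrt{|n+\alpha|^2-k^2}$ dominates the indefinite cross term $(1-t)|n+\alpha|\,|\hat H_n|^2$; your normalization $t=\min\{c,1\}$ just keeps the factor $1-t$ nonnegative, while the paper works with $\Re\rho$ and a factor $2(1-\Re\rho)$ of either sign. Second, and more substantively, the closing step differs: the paper collects the bad modes into a finite-rank operator $K$ on $H^{1/2}_{\mathrm{p}}(\Gamma_h)^3$ and asserts $|\int_{\Gamma_h}K(H)\cdot\overline{H}\d{s}|\le C\int_\Omega|H|^2\d{x}$ by equivalence of norms on finite-dimensional spaces; taken literally this is delicate, because the trace functionals $H\mapsto\hat H_n(h)$ are not bounded on $L^2(\Omega)$ no matter how few of them one keeps. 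Your absorption via the multiplicative trace inequality $\|H\|^2_{L^2(\Gamma_h)}\le\varepsilon\|\nabla H\|^2_{L^2(\Omega)}+C_\varepsilon\|H\|^2_{L^2(\Omega)}$, which sacrifices half of the coercive term and ends with $c_1=t/2$, is exactly what is needed to make this step airtight. In short, your route is self-contained and, at the last step, more careful; the paper's buys brevity by citation.
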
 

\begin{proof}
As in~\cite[proof of Theorem 1]{Schmi2003} one shows that 
 \begin{multline*}
   \Re(\mathcal{B}(H,H)) \geq \Re (\rho) \int_{\Omega}(|\curl_{\alpha} H|^2 + |\div_{\alpha} H|^2)\d{x}
   - k^2 \int_{\Omega} |H|^2 \d{x} \\ 
   - \Re \int_{\Gamma_h} T_\alpha H\cdot \overline{H}\d{s} 
   - 2 \Re\int_{\Gamma_h} \left(\overline{H_3}\frac{\pa^{\alpha}H_1}{\pa x_1} 
     + \overline{H_3}\frac{\pa^{\alpha}H_2}{\pa x_2} \right) \d{s}.
\end{multline*}
The following identity follows from integrations by parts, the periodicity, and the vanishing 
normal component of $H$ on $\Gamma_0$, 
\[
  \int_{\Omega}(|\curl_{\alpha} H|^2 + |\div_{\alpha} H|^2)\d{x} 
  = \int_{\Omega}|\nabla_{\alpha}H|^2\d{x} 
  + 2 \Re \int_{\Gamma_h} \bigg(\overline{H_3}\frac{\pa^{\alpha}H_1}{\pa x_1} 
  + \overline{H_3}\frac{\pa^{\alpha}H_2}{\pa x_2}\bigg)\d{s}.
\]
In consequence, 
\begin{multline*}
  \Re(\mathcal{B}(H,H)) \geq \Re (\rho) \int_{\Omega}|\nabla_{\alpha}H|^2\d{x} 
  - k^2 \int_{\Omega} |H|^2 \d{x} \\ 
  - \Re\int_{\Gamma_h} T_\alpha H\cdot \overline{H}\d{s} 
  - 2(1 - \Re (\rho)) \,  \Re \int_{\Gamma_h} \bigg(\frac{\pa^{\alpha}H_1}{\pa x_1} + \frac{\pa^{\alpha}H_2}{\pa x_2}\bigg)\overline{H_3}\d{s}. 
\end{multline*}
Precisely as in~\cite{Schmi2003} one shows now by a Fourier series argument that   
\begin{align*}
 -\Re\int_{\Gamma_h} T_\alpha H\cdot \overline{H}\d{s} 
- 2 (1 - \Re (\rho))\Re \int_{\Gamma_h} \bigg(\frac{\pa^{\alpha}H_1}{\pa x_1} + \frac{\pa^{\alpha}H_2}{\pa x_2}\bigg)\overline{H_3}\d{s}
 &\geq \Re\int_{\Gamma_h} K(H) \cdot \ol{H} \d{s} \\ &\geq -C\int_\Omega |H|^2\d x
\end{align*}
for a finite-dimensional operator $K$ on $H^{1/2}_{\mathrm{p}}(\Gamma_h)^3$. Note that
the last inequality follows from $|\int_{\Gamma_h} K(H)\cdot\overline{H}\d s|\leq C\int_\Omega |H|^2\d x$ 
due to the finite-dimensional range of $K$ and the fact that on finite-dimensional spaces all norms are equivalent. 
The last inequality implies a G\r{a}rding inequality for $\mathcal{B}$. 
\end{proof}

For simplicity we write from now on $H$ for the searched-for scattered field 
$H^s_\alpha$ in~\eqref{eq:prepVariational} and replace the source function 
$\curl H^{ir}_\alpha$ by a $G \in H_{\mathrm{p}}^1(\Omega)^3$. 
The last theorem implies the following corollary. 
\begin{corollary}
The variational problem to find 
$H \in H^1_{\mathrm{p}, \mathrm{T}}(\Omega)^3$ such that 
\begin{equation}
  \label{eq:weakProblem}
  \mathcal{B}(H,F) 
  = \int_{\Omega} (1-\epsr^{-1}) G \cdot \overline{\curl_{\alpha} F}\d{x} 
  \quad \text{for all } F \in H^1_{\mathrm{p}, \mathrm{T}}(\Omega)^3 
\end{equation}
satisfies the Fredholm alternative, i.e., uniqueness of solution implies 
existence of solution.
\end{corollary}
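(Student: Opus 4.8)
The plan is to cast the variational problem~\eqref{eq:weakProblem} as an operator equation on the Hilbert space $V := H^1_{\mathrm{p}, \mathrm{T}}(\Omega)^3$ and to invoke the Riesz--Fredholm theory. First I would note that, since $\mathcal{B}$ is bounded and the right-hand side $F \mapsto \int_{\Omega} (1-\epsr^{-1}) G \cdot \overline{\curl_{\alpha} F}\d{x}$ is a bounded conjugate-linear functional on $V$, the Riesz representation theorem furnishes a bounded linear operator $\mathcal{A}: V \to V$ together with an element $g \in V$ such that $\mathcal{B}(H,F) = (\mathcal{A} H, F)_{V}$ for all $H, F \in V$ and such that~\eqref{eq:weakProblem} is equivalent to the equation $\mathcal{A} H = g$.

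Next I would exploit the G\r{a}rding inequality~\eqref{eq:Garding} established in Theorem~\ref{th:Fredholm}. Using that $\| F \|_{V}^2 = \| F \|_{L^2(\Omega)^3}^2 + \| \nabla_\alpha F \|_{L^2(\Omega)^3}^2$, the inequality~\eqref{eq:Garding} rewrites as
\[
  \Re \mathcal{B}(H,H) \geq c_1 \| H \|_{V}^2 - (c_1 + c_2) \| H \|_{L^2(\Omega)^3}^2.
\]
Let $\mathcal{K}: V \to V$ be the bounded operator defined through $(\mathcal{K} H, F)_{V} = (c_1 + c_2)(H, F)_{L^2(\Omega)^3}$. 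Because $\Omega$ is a bounded domain and the embedding $H^1_{\mathrm{p}}(\Omega)^3 \hookrightarrow L^2(\Omega)^3$ is compact (Rellich--Kondrachov; the biperiodic boundary conditions cause no difficulty here), the operator $\mathcal{K}$ is compact. Moreover $\Re((\mathcal{A} + \mathcal{K}) H, H)_{V} = \Re \mathcal{B}(H,H) + (c_1+c_2)\| H \|_{L^2(\Omega)^3}^2 \geq c_1 \| H \|_{V}^2$, so the sesquilinear form associated with $\mathcal{A} + \mathcal{K}$ is bounded and coercive, and hence $\mathcal{A} + \mathcal{K}$ is boundedly invertible by the Lax--Milgram theorem.

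I would then conclude that $\mathcal{A} = (\mathcal{A} + \mathcal{K}) - \mathcal{K}$ is a compact perturbation of an isomorphism, whence $\mathcal{A}$ is a Fredholm operator of index zero. For such an operator, injectivity is equivalent to surjectivity. Since uniqueness of solution to~\eqref{eq:weakProblem} is precisely the statement that the homogeneous problem $\mathcal{A} H = 0$ admits only the trivial solution, i.e. that $\mathcal{A}$ is injective, vanishing index forces $\mathcal{A}$ to be bijective. Hence $\mathcal{A} H = g$ is uniquely solvable for every $g \in V$, which is the asserted existence. The only genuinely substantive ingredient is the compactness of $\mathcal{K}$, that is, the compact Sobolev embedding on the bounded biperiodic cell; everything else is the routine translation of a G\r{a}rding inequality into Fredholm theory of index zero, so I expect no real obstacle beyond checking that compact embedding in the present periodic, vector-valued setting.
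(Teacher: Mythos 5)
Your proposal is correct and is essentially the argument the paper relies on: the paper derives the G\r{a}rding inequality~\eqref{eq:Garding} in Theorem~\ref{th:Fredholm} and then invokes the standard Riesz--Fredholm machinery (compact embedding of $H^1_{\mathrm{p},\mathrm{T}}(\Omega)^3$ into $L^2(\Omega)^3$, splitting the Riesz-represented operator into a coercive part plus a compact perturbation, hence index zero) without spelling it out, which is exactly what you have written down. No gaps; your only substantive ingredient, the Rellich--Kondrachov compactness on the bounded periodic cell, is indeed valid since $H^1_{\mathrm{p},\mathrm{T}}(\Omega)^3$ is a closed subspace of $H^1(\Omega)^3$ on the bounded Lipschitz domain $\Omega$.
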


Note that this formulation corresponds to the 
usual variational formulation of the Maxwell equations with perfectly 
conducting magnetic boundary conditions in smooth bounded domains, 
see, e.g.,~\cite[Section 4.5(b)]{COSTABEL:2010:HAL-00453934:2}.
For special material parameters $\epsr^{-1}$ in
\[
  W^{1,\infty}_{\mathrm{p}}(\Omega) := \{ f \in L^\infty(\Omega): \, 
  f = \tilde{f}|_{\Omega} \text{ for some $2\pi$-biperiodic } \tilde{f} \in W^{1,\infty}(\R^3) \}
\] 
we will in the sequel of the paper establish a uniqueness result via 
a Rellich identity. The next lemma will be useful when proving this
identity.

\begin{lemma}
\label{th:strongRelation1}
 Assume that $\epsr^{-1} \in W^{1,\infty}_{\mathrm{p}}(\Omega)$ is positive and bounded away from zero, and that 
 $G \in H^1_{\mathrm{p}}(\Omega)^3$. Then a 
solution $H \in H^1_{\mathrm{p}, \mathrm{T}}(\Omega)^3$ to problem~\eqref{eq:weakProblem} 
satisfies 
\begin{align}
\label{eq:eqOmega1}
 \curl_{\alpha} (\epsr^{-1} \curl_{\alpha} H) - k^2  H & = \curl_{\alpha}( (1-\epsr^{-1}) G) 
 \qquad \text{in } L^2(\Omega)^3, \\
 \div_\alpha H & = 0 \qquad \text{in } L^2(\Omega), \\
 e_3 \times (\epsr^{-1} \curl_\alpha H) & = e_3 \times ((1-\epsr^{-1}) G) \qquad \text{in } H^{-1/2}_{\mathrm{p}}(\Gamma_0)^3, \\
 e_3 \cdot H & = 0 \qquad \text{in } H^{1/2}_{\mathrm{p}}(\Gamma_0). \label{eq:eqOmega2}
\end{align}
Moreover, 
\begin{equation}
\label{eq:boundaryId}
  e_3  \times R_\alpha \times H = e_3 \times \curl_\alpha H 
  \quad \text{ in } H^{-1/2}_{\mathrm{p}}(\Gamma_h)^3 \text{ and} \quad  
  R_\alpha \cdot H = 0 \quad \text{ in } H^{-1/2}_{\mathrm{p}}(\Gamma_h),
\end{equation}
and $\partial H / \partial x_3 = T_\alpha (H)$ holds in $H^{-1/2}_{\mathrm{p}}(\Gamma_h)$.
\end{lemma}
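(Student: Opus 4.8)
The plan is to recover each assertion by inserting successively larger classes of test functions into the weak equation~\eqref{eq:weakProblem} and integrating by parts, the decisive point being the divergence-free property, from which everything else follows. \emph{Interior equation.} First I would test with $F \in C_c^\infty(\Omega)^3$, so that all surface integrals over $\Gamma_h$ in $\mathcal B$ disappear. Since $\alpha$ is real, the formal adjoints of $\curl_\alpha$, $\div_\alpha$ and $\nabla_\alpha$ are first-order operators of the same type, so integration by parts in the volume terms moves every derivative onto $F$ and yields, in $\mathcal{D}'(\Omega)^3$,
\[
  \curl_\alpha(\epsr^{-1}\curl_\alpha H) - k^2 H - \rho\,\nabla_\alpha(\div_\alpha H) = \curl_\alpha((1-\epsr^{-1})G),
\]
the middle term being the contribution of the divergence penalization. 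As $\curl_\alpha H$, $H$ and $G$ are square-integrable, this already holds in $L^2(\Omega)^3$ and reduces to~\eqref{eq:eqOmega1} as soon as $\div_\alpha H$ is shown to vanish.

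\emph{Divergence-free property (the main obstacle).} Writing $w := \div_\alpha H \in L^2(\Omega)$ and applying $\div_\alpha$ to the interior equation, the identity $\div_\alpha\curl_\alpha = 0$ leaves $\rho\,\Delta_\alpha w + k^2 w = 0$ in $\Omega$, so that $w\in H^2_{\mathrm{loc}}$ by interior elliptic regularity. The crux is to furnish $w$ with homogeneous, self-adjoint boundary data. For this I would test~\eqref{eq:weakProblem} with gradients $F = \nabla_\alpha p$, where $p$ is periodic with $\partial p/\partial x_3 = 0$ on $\Gamma_0$ (so that $F \in H^1_{\mathrm{p},\mathrm{T}}(\Omega)^3$ and $\curl_\alpha F = 0$); substituting the interior equation for $w$ and carefully unwinding the two boundary terms on $\Gamma_h$ (in particular the term $-\int_{\Gamma_h}(R_\alpha\cdot H)(e_3\cdot\overline{F})\d{s}$, which is tailored precisely for this computation), the volume contributions cancel and one is left with boundary integrals that must vanish for every admissible $p$. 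This produces homogeneous boundary conditions for $w$ on $\Gamma_0$ and $\Gamma_h$ for which $-\Delta_\alpha = (-\i\nabla + \alpha)^2$ is self-adjoint and hence has real spectrum. Since $\Im\rho < 0$ forces $k^2/\rho$ to be non-real, the eigenvalue equation $-\Delta_\alpha w = (k^2/\rho)\,w$ admits only $w = 0$. Thus $\div_\alpha H = 0$, the penalization term disappears, and~\eqref{eq:eqOmega1} holds in $L^2(\Omega)^3$.

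\emph{Boundary conditions on $\Gamma_0$.} With $\div_\alpha H = 0$ and~\eqref{eq:eqOmega1} at hand, I would test~\eqref{eq:weakProblem} with arbitrary $F \in H^1_{\mathrm{p},\mathrm{T}}(\Omega)^3$ and integrate the curl-curl term by Stokes' formula. The penalty term drops out; the volume integrals match~\eqref{eq:eqOmega1} and the right-hand side; the lateral contributions cancel by periodicity; and the outer normal is $-e_3$ on $\Gamma_0$ and $e_3$ on $\Gamma_h$. Since the tangential trace of $F$ on $\Gamma_0$ is unconstrained (only $F_3 = 0$ is imposed), comparison of the surviving $\Gamma_0$-integrals gives $e_3\times(\epsr^{-1}\curl_\alpha H) = e_3\times((1-\epsr^{-1})G)$, while $e_3\cdot H = 0$ in~\eqref{eq:eqOmega2} is already built into the space $H^1_{\mathrm{p},\mathrm{T}}(\Omega)^3$.

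\emph{Relations on $\Gamma_h$.} Near $\Gamma_h$ we have $\epsr^{-1}\equiv 1$ and $\div_\alpha H = 0$, so~\eqref{eq:eqOmega1} reduces there to $(\Delta_\alpha + k^2)H = 0$ componentwise; together with the Rayleigh expansion~\eqref{eq:radiationCondition} selected by $T_\alpha$ this identifies the normal derivative of each component with the Dirichlet-to-Neumann operator, giving $\partial H/\partial x_3 = T_\alpha(H)$ on $\Gamma_h$. The relation $e_3\times(R_\alpha\times H) = e_3\times\curl_\alpha H$ then follows as recalled before the lemma, and combining $\partial H/\partial x_3 = T_\alpha H$ with $\div_\alpha H = 0$ yields $R_\alpha\cdot H = 0$, completing~\eqref{eq:boundaryId}. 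The main difficulty is the second step: reading off from the nonlocal $\Gamma_h$-terms the correct self-adjoint boundary conditions for $w$ so that the sign of $\Im\rho$ can be exploited.
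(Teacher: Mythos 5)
Your overall scaffolding (compactly supported test functions for the interior equation, then Stokes' formula with general test functions to read off the $\Gamma_0$ conditions) matches the paper's proof, and your strategy for $\div_\alpha H = 0$ is in the spirit of the reference (Schmidt 2003, Theorem~2) that the paper simply cites for that step. However, there are two genuine gaps.

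First, your derivation of the $\Gamma_h$ identities is circular. The variational solution $H$ is defined only in $\Omega$; it is \emph{not} known a priori to satisfy the Rayleigh expansion~\eqref{eq:radiationCondition}, so you cannot invoke that expansion to conclude $\partial H/\partial x_3 = T_\alpha(H)$ on $\Gamma_h$. Indeed, the interior equation $(\Delta_\alpha + k^2)H = 0$ in the strip $\{h-\eta < x_3 < h\}$ is satisfied by both upward and downward propagating modes, and it is precisely the boundary identities~\eqref{eq:boundaryId} — which must be extracted from the weak formulation — that encode the radiation condition and allow $H$ to be extended above $\Gamma_h$ as an outgoing field. The paper's route is the correct (and opposite) one: multiply the strong equation~\eqref{eq:eqOmega1} by $F \in H^1_{\mathrm{p},\mathrm{T}}(\Omega)^3$, subtract from~\eqref{eq:weakProblem}, and then choose $F|_{\Gamma_h}=0$ to get the $\Gamma_0$ condition, $e_3\cdot F|_{\Gamma_h}=0$ to get $e_3\times(R_\alpha\times H) = e_3\times\curl_\alpha H$, and general $F$ to get $R_\alpha\cdot H = 0$; only then does $\partial H/\partial x_3 = T_\alpha(H)$ follow (via Lemma~1 of Schmidt 2003). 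You prove the last identity first from an assumption you do not have, and deduce the others from it.

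Second, the step you yourself call the crux of the divergence argument is asserted rather than carried out, and the assertion is doubtful as stated. If one actually unwinds the two $\Gamma_h$ terms of $\mathcal{B}$ against the strong equation, the normal-component relation that emerges is not a homogeneous boundary condition for $w = \div_\alpha H$ alone but the coupled relation
\begin{equation*}
  (\rho - 1)\,\div_\alpha H \;=\; T_\alpha H_3 - \frac{\pa H_3}{\pa x_3} \qquad \text{on } \Gamma_h,
\end{equation*}
which ties $w$ to $H_3$ through the exterior Dirichlet--Neumann operator $T_\alpha$. That operator is not self-adjoint (its symbol $\i\beta_n$ is purely imaginary on propagating modes and real negative on evanescent ones), so the claim that $-\Delta_\alpha$ with the induced boundary conditions is self-adjoint with real spectrum, hence incompatible with the non-real eigenvalue $k^2/\rho$, does not go through as written. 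The argument in the cited reference resolves this coupling by a more careful treatment of the boundary terms (sign considerations exploiting $\Im\rho < 0$ mode by mode), and that, or an equivalent computation, is what your sketch would need to supply.
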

\begin{proof}
The proof that $\div_{\alpha} H =0$ is analogous to the proof of~\cite[Theorem~2]{Schmi2003}.
In consequence, using a test function $F \in C^{\infty}_0(\Omega)^3$ in 
the variational problem~\eqref{eq:weakProblem} shows that the solution 
$H$ satisfies the differential equation~\eqref{eq:eqOmega1} in the 
distributional sense. 
Since $H \in H^1_{\mathrm{p}, \mathrm{T}}(\Omega)^3$,~\eqref{eq:eqOmega1} holds 
in the $L^2$-sense if the right-hand side belongs to $L^2(\Omega)^3$, which holds 
if $\epsr^{-1} \in W^{1,\infty}_{\mathrm{p}}(\Omega)$ and 
$G \in H^1_{\mathrm{p}}(\Omega)^3$.

Multiplying~\eqref{eq:eqOmega1} by 
$F \in H^1_{\mathrm{p},\mathrm{T}}(\Omega)^3$, using the Stokes formula, 
and subtracting the resulting expression from the variational 
formulation~\eqref{eq:weakProblem}, we find that 
\begin{multline*}
   \int_{\Gamma_h} e_3 \times (R_\alpha\times H) \cdot \overline{F}\d{s} 
   - \int_{\Gamma_h} (R_\alpha\cdot H) (e_3 \cdot \overline{F}) \d{s} 
   - \int_{\Gamma_h} e_3 \times \curl_\alpha H \cdot \ol{F} \d{s}\\
   + \int_{\Gamma_0} e_3 \times (\epsr^{-1} \curl_\alpha H) \cdot \ol{F} \d{s}
   - \int_{\Gamma_0} e_3 \times ((1-\epsr^{-1}) G) \cdot \ol{F} \d{s}
   = 0.
\end{multline*}
If we choose $F$ such that $\left. F \right|_{\Gamma_h} = 0$, then 
we see that $e_3 \times (\epsr^{-1} \curl_\alpha H - (1-\epsr^{-1}) G) =0$ in 
$H^{-1/2}_{\mathrm{p}}(\Gamma_0)$. If $\left. e_3 \cdot F \right|_{\Gamma_h} = 0$, 
it follows that $e_3 \times (R_\alpha\times H) = e_3 \times \curl_\alpha H$ in 
$H^{-1/2}_{\mathrm{p}}(\Gamma_h)^3$. Hence, $R_\alpha\cdot H = 0$ 
in $H^{-1/2}_{\mathrm{p}}(\Gamma_h)$.
These identities imply that $\partial H / \partial x_3 = T_\alpha (H)$ in 
$H^{-1/2}_{\mathrm{p}}(\Gamma_h)$ due to~\cite[Lemma 1]{Schmi2003}.
\end{proof}

\begin{remark}
Instead of the above variational formulation in 
$H^1_{\mathrm{p}, \mathrm{T}}(\Omega)^3$, one 
can also consider formulations in $H_{\mathrm{p}}(\curl_{\alpha},\Omega)^3$, 
the natural energy space for the second-order Maxwell
equations~\eqref{eq:secondOrder}, see, e.g.,~\cite{Abbou1993}. 
In $H_{\mathrm{p}}(\curl_{\alpha},\Omega)^3$  there is no bounded trace operator 
for the normal component of the field, and in consequence, the 
formulation~\eqref{eq:weakProblem} needs to be adapted. 
Usually, one replaces $F \mapsto e_3\times (R_\alpha \times F)\times e_3$ by $Q(e_3 \times H)$, where 
$Q$ is a bounded operator between the natural trace spaces 
$H^{-{1/2}}_{\mathrm{p},\div}(\Gamma_h)$ and $H^{-{1/2}}_{\mathrm{p},\curl}(\Gamma_h)$, 
 defined by
\begin{equation}
  (QF)(x) = -\sum_{n\in \Lambda}\frac{1}{i \beta_n}\{k^2\hat{F}_{T,n} - [(n+\alpha)\cdot\hat{F}_n](n+\alpha)\}e^{i n\cdot x},
  \quad \text{for }
  F(x) = \sum_{n\in \Lambda} \hat{F}_{n} e^{i n\cdot x},
\end{equation}
see, e.g.,~\cite{Abbou1993}. Obviously this definition only makes sense if all $\beta_n$ are 
non-zero. If this is the case, then the variational formulation~\eqref{eq:weakProblem} 
is equivalent to the formulation in $H_{\mathrm{p}}(\curl_{\alpha},\Omega)^3$ obtained using $Q$.
Under the assumption that $\beta_n \not = 0$, all subsequent results could also be 
obtained via the formulation in $H_{\mathrm{p}}(\curl_{\alpha},\Omega)^3$.
\end{remark}

\section{Integral Identities}
This section is concerned with technical lemmas that will be used to derive 
the Rellich identity and solution bounds subsequently. Roughly speaking, 
for deriving the Rellich identity, we will multiply the Maxwell 
equations~\eqref{eq:eqOmega1} by $x_3\pa H/\pa x_3$ and integrate by 
parts. Therefore, it is the aim of the technical lemmas in this 
section to analyze the term $\Re\intO x_3\pa H/\pa x_3\cdot\ov{\curl_{\alpha}(\epsr^{-1}\curl_{\alpha} H)}\d{x}$ for
a solution $H \in H^1_{\mathrm{p}, \mathrm{T}}(\Omega)^3$ to the problem~\eqref{eq:weakProblem}.
Note that the first two lemmas need the function $H$ to be in $H_{\mathrm{p}}^2(\Omega)^3$.
These lemmas for the magnetic field formulation actually correspond to the ones for the 
electric field formulation in~\cite[Section 3]{Hadda2011}.

We need to introduce some notation. For a vector field $F=(F_1,F_2,F_3)^{\top}$ 
we denote by $F_T=(F_1,F_2,0)^{\top}$ its transverse part. Recall that 
$\pa^{\alpha} f / \pa x_j = \pa f / \pa x_j + i \alpha_j f$ 
for a scalar function $f$ and $j=1,2,3$. Further, we introduce 
\[
  \nabla_T f := \bigg(\frac{\pa f}{\pa x_1},\frac{\pa f}{\pa x_2},0\bigg)^{\top}, \quad 
  \nalT f := \bigg(\frac{\pa^{\alpha}f}{\pa x_1},\frac{\pa^{\alpha}f}{\pa x_2},0\bigg)^{\top}, \quad
  \overrightarrow{\curl}_{\alpha,T} f
    := \bigg(\frac{\pa^{\alpha}f}{\pa x_2},-\frac{\pa^{\alpha}f}{\pa x_1},0\bigg)^{\top},
\]
and, for a vector field $F=(F_1,F_2,F_3)^{\top}$,
\[
  \div_{\alpha,T} F:=\frac{\pa^{\alpha}F_1}{\pa x_1} + \frac{\pa^{\alpha}F_2}{\pa x_2}
  \quad \text{and} \quad 
  \curl_{\alpha,T} F := \frac{\pa^{\alpha}F_2}{\pa x_1} - \frac{\pa^{\alpha}F_1}{\pa x_2}.
\] 
It is straightforwards to show that $\div_{\alpha,T} \overrightarrow{\curl}_{\alpha,T} = 0$ as well as $\curl_{\alpha,T} \nalT = 0$. 
Moreover, a tedious computation shows that 
\begin{align*}
 \curl_{\alpha} F = (\curl_{\alpha,T}  F_T)e_3 + \overrightarrow{\curl}_{\alpha,T} F_3 - \frac{\pa(F\times e_3)}{\pa x_3},
\end{align*}
and further 
\begin{align}
\label{eq:equality1}
 |\curl_{\alpha} F|^2 = |\curl_{\alpha,T}  F_T|^2 + |\overrightarrow{\curl}_{\alpha,T} F_3|^2 
+ \left|\frac{\pa F_T}{\pa x_3}\right|^2 - 2\Re\left(\ol{\nalT F_3}\cdot \frac{\pa F_T}{\pa x_3}\right).
\end{align}
\begin{lemma}
\label{th:lemma2}
Assume that $\epsr^{-1} \in W^{1,\infty}_{\mathrm{p}}(\Omega)$ is positive 
and bounded away from zero and that $H \in H_{\mathrm{p}}^2(\Omega)^3$. Then
\begin{align}
\label{eq:eqlemma2}
  2\Re\intO & x_3\frac{\pa H}{\pa x_3}\cdot\ov{\curl_{\alpha}(\epsr^{-1}\curl_{\alpha} H)} \d{x} 
  = - \intO \frac{\pa(x_3\epsr^{-1})}{\pa x_3}|\curl_{\alpha} H|^2\d{x} 
  + h \int_{\Gamma_h}  |\curl_{\alpha} H|^2\d{s} \nonumber \\ 
  & + 2\Re \int_{\Omega} \epsr^{-1} \bigg(e_3 \times \frac{\pa H}{\pa x_3}\bigg)\cdot \ov{\curl_{\alpha} H}\d{x} 
  + 2h \Re \int_{\Gamma_h}  \frac{\pa H_T}{\pa x_3} \cdot(e_3 \times \ov{\curl_{\alpha} H})\d{s}. 
\end{align}
\end{lemma}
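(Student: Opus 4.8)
The plan is to transform the left-hand side by two integrations by parts: first move the outer $\curl_\alpha$ onto the weighted field $x_3\,\pa H/\pa x_3$, and then expand the resulting curl by the product rule. Since $\epsr^{-1}\in W^{1,\infty}_{\mathrm p}(\Omega)$ and $H\in H^2_{\mathrm p}(\Omega)^3$, the field $\epsr^{-1}\curl_\alpha H$ lies in $H^1_{\mathrm p}(\Omega)^3$ and has an $L^2$ curl, so the Stokes formula for $\curl_\alpha$ applies. Because $\alpha$ is real, the $i\alpha\times$ contributions cancel pointwise in the volume, so the shifted Green identity reduces to the classical one, $\intO(\curl_\alpha A\cdot\ov B - A\cdot\ov{\curl_\alpha B})\d x = \int_{\pa\Omega}(\nu\times A)\cdot\ov B\d s$. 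Applying it with $A=\epsr^{-1}\curl_\alpha H$ and $B=x_3\,\pa H/\pa x_3$ and then taking $2\Re$, I would rewrite the left-hand side as $2\Re\intO\epsr^{-1}\curl_\alpha H\cdot\ov{\curl_\alpha(x_3\,\pa H/\pa x_3)}\d x$ plus a boundary integral over $\pa\Omega$.

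For the boundary integral I would argue that the lateral faces cancel by $2\pi$-biperiodicity, that the contribution on $\Gamma_0$ vanishes because the weight $x_3$ is zero there, and that on $\Gamma_h$ the normal is $e_3$ and $\epsr^{-1}=1$ (as $h$ lies above $\supp(\epsr-1)$). Using that $e_3\times\curl_\alpha H$ is tangential, so only $\pa H_T/\pa x_3$ survives the pairing, and moving a conjugate inside $\Re$, this boundary piece becomes exactly $2h\Re\int_{\Gamma_h}(\pa H_T/\pa x_3)\cdot(e_3\times\ov{\curl_\alpha H})\d s$, the last term of the claimed identity.

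For the remaining volume integral I would compute $\curl_\alpha(x_3\,\pa H/\pa x_3)=e_3\times\pa H/\pa x_3 + x_3\,\pa(\curl_\alpha H)/\pa x_3$, using the product rule $\curl_\alpha(fV)=\nabla f\times V + f\,\curl_\alpha V$ with $f=x_3$ together with the fact that $\curl_\alpha$ commutes with $\pa/\pa x_3$. The first summand produces the term $2\Re\intO\epsr^{-1}(e_3\times\pa H/\pa x_3)\cdot\ov{\curl_\alpha H}\d x$ after one conjugation inside $\Re$. For the second summand I use the total-derivative identity $2\Re\big(\curl_\alpha H\cdot\ov{\pa(\curl_\alpha H)/\pa x_3}\big)=\pa|\curl_\alpha H|^2/\pa x_3$ to get $\intO x_3\epsr^{-1}\,\pa|\curl_\alpha H|^2/\pa x_3\d x$, and then integrate by parts in $x_3$; the $x_3=0$ boundary again drops out while $x_3\epsr^{-1}=h$ on $\Gamma_h$, yielding $-\intO \pa(x_3\epsr^{-1})/\pa x_3\,|\curl_\alpha H|^2\d x + h\int_{\Gamma_h}|\curl_\alpha H|^2\d s$. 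Collecting the three contributions reproduces the right-hand side.

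The main obstacle is bookkeeping rather than any single deep step: one must justify each integration by parts at precisely the $H^2_{\mathrm p}$ and $W^{1,\infty}_{\mathrm p}$ regularity that the hypotheses supply, and track the several boundary terms so that the lateral faces and $\Gamma_0$ really vanish while the $\Gamma_h$ pieces assemble correctly. The underlying vector-calculus identities (the $\curl_\alpha$ product rule, its commutation with $\pa_{x_3}$, and the real-part/total-derivative trick) are routine, but they must be handled carefully for the complex-valued, $\alpha$-shifted operators, and the systematic use of conjugation under $\Re$ is what converts each intermediate expression into the exact form stated.
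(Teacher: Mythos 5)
Your proof is correct and follows essentially the same route as the paper's: a first integration by parts via the Stokes/Green identity for $\curl_\alpha$ (with the boundary contribution surviving only on $\Gamma_h$, since $x_3=0$ kills $\Gamma_0$ and periodicity kills the lateral faces), then the product rule $\curl_\alpha(x_3\,\pa H/\pa x_3)=e_3\times\pa H/\pa x_3+x_3\,\pa(\curl_\alpha H)/\pa x_3$ together with the identity $2\Re\big(\curl_\alpha H\cdot\ov{\pa(\curl_\alpha H)/\pa x_3}\big)=\pa|\curl_\alpha H|^2/\pa x_3$, and finally an integration by parts in $x_3$. The only cosmetic difference is that you place the conjugations on the opposite factors, which is immaterial under $2\Re$.
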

\begin{proof}
Denote by $\nu$ the outward unit normal to $\Omega$. Using integration by 
parts and noting that $\nu  =e_3$ on $\Gamma_h$, 
and that the boundary term on $\Gamma_0$ vanishes since $x_3 = 0$ on $\Gamma_0$, we find that
\begin{align*}
  2\Re\intO & x_3\frac{\pa H}{\pa x_3}\cdot\ov{\curl_{\alpha}(\epsr^{-1}\curl_{\alpha} H)}\d{x} \\
  & = 2\Re\intO \epsr^{-1} \curl_{\alpha} \bigg(x_3\frac{\pa H}{\pa x_3} \bigg)\cdot \ov{\curl_{\alpha} H}\d{x} 
  + 2\Re\int_{\pa\Omega} x_3 \frac{\pa H}{\pa x_3} \cdot(\nu \times \ov{\epsr^{-1}\curl_{\alpha} H} )\d{s} \\ 
  & =  \intO \epsr^{-1} x_3 \frac{\pa|\curl_{\alpha} H|^2}{\pa x_3}\d{x}  + 2\Re \int_{\Omega} \epsr^{-1} \bigg(e_3 \times \frac{\pa H}{\pa x_3}\bigg)\cdot \ov{\curl_{\alpha} H}\d{x} \\ 
  & \hspace*{2cm} + 2h \Re \int_{\Gamma_h}  \frac{\pa H_T}{\pa x_3}\cdot (e_3 \times \ov{\curl_{\alpha} H})\d{s}\\
  & = - \intO \frac{\pa(x_3\epsr^{-1})}{\pa x_3}|\curl_{\alpha} H|^2\d{x} + 2\Re \int_{\Omega} \epsr^{-1} \bigg(e_3 \times \frac{\pa H}{\pa x_3}\bigg) \cdot\ov{\curl_{\alpha} H}\d{x} \\ 
  & \hspace*{2cm} + h \int_{\Gamma_h}  |\curl_{\alpha} H|^2\d{s} + 2h \Re \int_{\Gamma_h}  \frac{\pa H_T}{\pa x_3} \cdot(e_3 \times \ov{\curl_{\alpha} H}) \d{s}.
\end{align*}
\end{proof}

The next lemma continues the analysis of the term $\Re \int_{\Omega} \epsr^{-1} (e_3 \times \pa H/\pa x_3) \cdot \ov{\curl_{\alpha} H} \d{x}$
in the right hand side of~\eqref{eq:eqlemma2}.
\begin{lemma}
\label{th:lemma1}
 Assume that $\epsr^{-1} \in W^{1,\infty}_{\mathrm{p}}(\Omega)$ is positive and bounded away from zero. Then for all $H \in H_{\mathrm{p}}^2(\Omega)^3$ the following identity holds,
\begin{eqnarray}
\label{eq:eqlemma1}
2\Re \int_{\Omega} \epsr^{-1} \bigg(e_3 \times \frac{\pa H}{\pa x_3}\bigg) \cdot \ov{\curl_{\alpha} H} \d{x} 
= 2 \int_{\Omega} \epsr^{-1} \bigg|\frac{\pa H}{\pa x_3}\bigg|^2 \d{x} 
+ 2 \Re \intO\nabla \epsr^{-1}\cdot\frac{\pa H}{\pa x_3}\ov{H_3}\d{x} \nonumber \\ 
- 2\Re \intO \frac{\pa(\epsr^{-1}\ov{H_3})}{\pa x_3}\div_{\alpha} H\d{x} 
- 2 \Re \int_{\Gamma_h} \bigg(\frac{\pa H_3}{\pa x_3} - \div_{\alpha} H\bigg) \ov{H_3}\d{s} \nonumber \\
- 2 \Re \int_{\Gamma_0} \epsr^{-1}\ov{H_3}\div_{\alpha,T} H_T \d{s}.
\end{eqnarray}
\end{lemma}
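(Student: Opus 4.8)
The plan is to rewrite the integrand pointwise so that the left-hand side splits into the volume term $2\intO\epsr^{-1}|\pa H/\pa x_3|^2\d{x}$ plus a single term involving $\ov{\nal H_3}$, and then to integrate that second term by parts twice, keeping careful track of the boundary contributions on $\Gamma_0$ and $\Gamma_h$.

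First I would evaluate the integrand. Since $e_3\times\frac{\pa H}{\pa x_3}=(-\pa H_2/\pa x_3,\,\pa H_1/\pa x_3,\,0)^\top$, inserting the explicit components of $\curl_\alpha H$ gives
\[
\Big(e_3\times\frac{\pa H}{\pa x_3}\Big)\cdot\ov{\curl_\alpha H}=\Big|\frac{\pa H_1}{\pa x_3}\Big|^2+\Big|\frac{\pa H_2}{\pa x_3}\Big|^2-\frac{\pa H_1}{\pa x_3}\ov{\frac{\pa^{\alpha}H_3}{\pa x_1}}-\frac{\pa H_2}{\pa x_3}\ov{\frac{\pa^{\alpha}H_3}{\pa x_2}}.
\]
Multiplying by $2\epsr^{-1}$, taking the real part, and completing $|\pa H_1/\pa x_3|^2+|\pa H_2/\pa x_3|^2$ to $|\pa H/\pa x_3|^2$ by adding and subtracting $|\pa H_3/\pa x_3|^2$, the two cross terms absorb the extra $-|\pa H_3/\pa x_3|^2$ into the full inner product with $\nal H_3$, yielding the pointwise identity
\[
2\Re\,\epsr^{-1}\Big(e_3\times\frac{\pa H}{\pa x_3}\Big)\cdot\ov{\curl_\alpha H}=2\epsr^{-1}\Big|\frac{\pa H}{\pa x_3}\Big|^2-2\Re\,\epsr^{-1}\frac{\pa H}{\pa x_3}\cdot\ov{\nal H_3}.
\]
Integrating over $\Omega$ already produces the first term of~\eqref{eq:eqlemma1}, so the remaining work concerns only the second term.

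Next I would apply a $\div_\alpha$-Green identity to the field $V:=\epsr^{-1}\,\pa H/\pa x_3$ and the scalar $u:=H_3$, namely
\[
\intO V\cdot\ov{\nal u}\d{x}=-\intO(\div_\alpha V)\,\ov u\d{x}+\int_{\Gamma_h}(e_3\cdot V)\,\ov u\d{s}-\int_{\Gamma_0}(e_3\cdot V)\,\ov u\d{s},
\]
where the lateral boundary terms drop out by biperiodicity and $\nu=\pm e_3$ on $\Gamma_h,\Gamma_0$. Using $\div_\alpha V=\nabla\epsr^{-1}\cdot\frac{\pa H}{\pa x_3}+\epsr^{-1}\,\frac{\pa(\div_\alpha H)}{\pa x_3}$ (the $H^2$-regularity lets $\pa/\pa x_3$ commute with $\div_\alpha$) and $e_3\cdot V=\epsr^{-1}\,\pa H_3/\pa x_3$, this recovers the gradient term $2\Re\intO\nabla\epsr^{-1}\cdot(\pa H/\pa x_3)\ov{H_3}\d{x}$, a remaining volume term $2\Re\intO\epsr^{-1}(\pa(\div_\alpha H)/\pa x_3)\ov{H_3}\d{x}$, and boundary integrals. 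A second integration by parts in $x_3$ then moves $\pa/\pa x_3$ off $\div_\alpha H$ onto $\epsr^{-1}\ov{H_3}$, turning the volume term into exactly $-2\Re\intO\frac{\pa(\epsr^{-1}\ov{H_3})}{\pa x_3}\div_\alpha H\d{x}$ together with two further boundary integrals on $\Gamma_h$ and $\Gamma_0$.

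What remains, and what I expect to be the only delicate point, is to assemble the boundary contributions correctly. On $\Gamma_h$ I would use that $\epsr^{-1}=1$ there (by the choice of $h$) to combine the two $\Gamma_h$ integrals into $-2\Re\int_{\Gamma_h}(\pa H_3/\pa x_3-\div_\alpha H)\ov{H_3}\d{s}$. On $\Gamma_0$ the surviving combination is $2\Re\int_{\Gamma_0}\epsr^{-1}(\pa H_3/\pa x_3-\div_\alpha H)\ov{H_3}\d{s}$, and inserting $\div_\alpha H=\div_{\alpha,T}H_T+\pa H_3/\pa x_3$ collapses it to $-2\Re\int_{\Gamma_0}\epsr^{-1}\ov{H_3}\div_{\alpha,T}H_T\d{s}$. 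These are precisely the two boundary terms of~\eqref{eq:eqlemma1}, which completes the identity. Note that here $H$ is a general $H^2_{\mathrm p}(\Omega)^3$ field with no prescribed trace on $\Gamma_0$, which is exactly why the $\Gamma_0$ integral is retained rather than vanishing.
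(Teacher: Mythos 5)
Your proof is correct, and while it rests on the same basic technique as the paper's (integration by parts to move derivatives off the cross term coupling $\pa H/\pa x_3$ with $H_3$), your decomposition is genuinely different and tidier. The paper expands the left-hand side as $2\intO\epsr^{-1}|\pa H_T/\pa x_3|^2\d{x}$ plus \emph{two} separate cross terms, one with $\naT\ov{H_3}$ and one with $i\alpha\ov{H_3}$; it then integrates the $\naT$-term by parts transversally, invokes $\div_T H_T = -\pa H_3/\pa x_3+\div_{\alpha}H - i\alpha\cdot H$, integrates by parts in $x_3$, and finally needs one more Green formula on the $i\alpha\cdot H$ term so that it recombines with the other cross term. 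You instead absorb $|\pa H_3/\pa x_3|^2$ into the full $|\pa H/\pa x_3|^2$ already at the pointwise level, leaving a single cross term $-2\Re\intO\epsr^{-1}\,\pa H/\pa x_3\cdot\ov{\nal H_3}\d{x}$; one $\div_{\alpha}$-Green identity plus one one-dimensional integration by parts in $x_3$ then produce all four remaining terms of~\eqref{eq:eqlemma1}. What your route buys is that the $i\alpha$-contributions never have to be tracked separately — they ride along inside the covariant operators $\nal$ and $\div_{\alpha}$ — which eliminates the paper's extra Green-formula step and shortens the computation. Both arguments ultimately use the same three facts: biperiodicity to kill the lateral boundary terms, $\alpha_3=0$ so that $\div_{\alpha}H = \div_{\alpha,T}H_T+\pa H_3/\pa x_3$, and the standing assumption that $\epsr^{-1}=1$ near $\Gamma_h$ (built into the choice of $h$) to strip the coefficient from the $\Gamma_h$ integral, exactly as the paper does implicitly. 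Your closing observation that no trace condition on $\Gamma_0$ is needed is also consistent with the statement, which holds for arbitrary $H\in H^2_{\mathrm{p}}(\Omega)^3$.
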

\begin{proof}
First, we have
\begin{eqnarray}
\label{eq:eq1}
 2\Re \int_{\Omega} \epsr^{-1} \bigg(e_3 \times \frac{\pa H}{\pa x_3}\bigg)\cdot \ov{\curl_{\alpha} H}\d{x} = 2 \int_{\Omega} \epsr^{-1} \bigg|\frac{\pa H_T}{\pa x_3}\bigg|^2\d{x} \qquad \qquad \qquad \qquad \nonumber \\ - 2 \Re \int_{\Omega} \epsr^{-1} \frac{\pa H_T}{\pa x_3} \cdot \naT \ov{H_3}\d{x} + 2 \Re \int_{\Omega} \epsr^{-1} \frac{\pa H_T}{\pa x_3} \cdot i\alpha \ov{H_3}\d{x}.
\end{eqnarray}
Second, we compute that
\begin{multline*} 
  -2 \Re \int_{\Omega} \epsr^{-1} \frac{\pa H_T}{\pa x_3} \cdot \naT \ov{H_3}\d{x} = 2 \Re \intO \div_T \bigg(\epsr^{-1} \frac{\pa H_T}{\pa x_3}\bigg) \ov{H_3}\d{x} \\ = 2 \Re \intO \epsr^{-1} \div_T \bigg(\frac{\pa H_T}{\pa x_3}\bigg) \ov{H_3}\d{x} + 2 \Re \intO \naT \epsr^{-1} \cdot \frac{\pa H_T}{\pa x_3} \ov{H_3}\d{x} \\
= -2 \Re \intO \frac{\pa \epsr^{-1}}{\pa x_3} \ov{H_3} \div_T H_T \d{x} - 2 \Re \intO \epsr^{-1} \frac{\pa\ov{H_3}}{\pa x_3} \div_T H_T\d{x} \\ + 2 \Re \intO \naT \epsr^{-1} \cdot \frac{\pa H_T}{\pa x_3} \ov{H_3}\d{x} + 2 \Re \int_{\Gamma_h}  \ov{H_3} \div_T H_T\d{s}  - 2 \Re \int_{\Gamma_0} \epsr^{-1} \ov{H_3} \div_T H_T\d{s}
\end{multline*}
Now, using the identity $\div_T H_T = - \pa H_3/\pa x_3 + \div_{\alpha} H - i\alpha \cdot H$, we obtain that
\begin{multline*} 
 -2 \Re \int_{\Omega} \epsr^{-1} \frac{\pa H_T}{\pa x_3} \cdot \naT \ov{H_3}\d{x} 
= 2 \Re\intO \frac{\pa \epsr^{-1}}{\pa x_3} \ov{H_3} (i\alpha\cdot H) \d{x} + 2 \Re \intO \frac{\pa \epsr^{-1}}{\pa x_3} \ov{H_3}\frac{\pa H_3}{\pa x_3} \d{x} \\ - 2\Re\intO \frac{\pa \epsr^{-1}}{\pa x_3}\ov{H_3} \div_{\alpha} H \d{x}  + 2\Re \intO\epsr^{-1}\frac{\pa\ov{H_3}}{\pa x_3} (i\alpha\cdot H)\d{x} + 2 \Re \intO \epsr^{-1}\bigg|\frac{\pa H_3}{\pa x_3}\bigg|^2\d{x} \\ - 2\Re\intO\epsr^{-1}\frac{\pa\ov{H_3}}{\pa x_3} \div_{\alpha} H\d{x} + 2 \Re\intO\naT\epsr^{-1} \cdot \frac{\pa H_T}{\pa x_3} \ov{H_3}\d{x} + 2 \Re \int_{\Gamma_h}  \ov{H_3} \div_T H_T \d{x} \\ - 2 \Re \int_{\Gamma_0} \epsr^{-1} \ov{H_3} \div_T H_T\d{s} 
\end{multline*}
Applying Green formula to the term $2 \Re\intO (\pa\epsr^{-1}/\pa x_3) \ov{H_3} (i \alpha\cdot H) \d{x}$, we have
\begin{multline*}
 -2 \Re \int_{\Omega} \epsr^{-1} \frac{\pa H_T}{\pa x_3} \cdot \naT \ov{H_3}\d{x} 
= -2\Re \intO\epsr^{-1}\frac{\pa H_T}{\pa x_3} \cdot i\alpha \ov{H_3}\d{x} 
 + 2\Re\intO\epsr^{-1}\bigg|\frac{\pa H_3}{\pa x_3}\bigg|^2 \d{x} \\
- 2\Re \intO\frac{\pa \epsr^{-1}}{\pa x_3}\ov{H_3}\div_{\alpha} H \d{x} 
- 2\Re \intO\epsr^{-1}\frac{\pa\ov{H_3}}{\pa x_3}\div_{\alpha} H \d{x} 
+ 2 \Re \intO\nabla \epsr^{-1}\cdot\frac{\pa H}{\pa x_3}\ov{H_3} \d{x} \\
-  2 \Re \int_{\Gamma_h} \bigg(\frac{\pa H_3}{\pa x_3} - \div_{\alpha} H\bigg) \ov{H_3}\d{s} 
- 2 \Re \int_{\Gamma_0} \epsr^{-1}\ov{H_3}\div_{\alpha,T} H_T \d{s}
\end{multline*}
Now the claim follows from substituting this identity into equation~\eqref{eq:eq1}.
\end{proof}

In the following final lemma of this section we will reformulate the 
term $\Re\intO x_3\pa H/\pa x_3\cdot\ov{\curl_{\alpha}(\epsr^{-1}\curl_{\alpha} H)} \d{x}$ 
for a solution $H \in H^1_{\mathrm{p}, \mathrm{T}}(\Omega)^3$ to the problem~\eqref{eq:weakProblem}
using the last two lemmas.
\begin{lemma}
\label{th:lemma3}
Assume that $\epsr^{-1} \in W^{1,\infty}_{\mathrm{p}}(\Omega)$ is positive and bounded away from zero. 
Then any solution $H \in H^1_{\mathrm{p}, \mathrm{T}}(\Omega)^3$ to the problem~\eqref{eq:weakProblem} satisfies
\begin{align*}
  2\Re\intO  x_3\frac{\pa H}{\pa x_3}\cdot\ov{\curl_{\alpha}(\epsr^{-1}\curl_{\alpha} H)} \d{x} 
  = - \intO \frac{\pa(x_3\epsr^{-1})}{\pa x_3}|\curl_{\alpha} H|^2\d{x} 
  + h \int_{\Gamma_h}|\curl_{\alpha} H|^2\d{s} \nonumber \\ 
  + 2 \int_{\Omega} \epsr^{-1} \bigg|\frac{\pa H}{\pa x_3}\bigg|^2\d{x} + 2 \Re \intO\nabla \epsr^{-1}\cdot\frac{\pa H}{\pa x_3}\ov{H_3}\d{x} 
  - 2 \Re \int_{\Gamma_h}  \ov{H_3} \frac{\pa H_3}{\pa x_3}\d{s} \\
  + 2h \Re \int_{\Gamma_h}  \frac{\pa H_T}{\pa x_3} \cdot(e_3 \times \ov{\curl_{\alpha} H})\d{s}. 
\end{align*}
\end{lemma}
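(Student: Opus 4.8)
The plan is to read off the identity by inserting Lemma~\ref{th:lemma1} into Lemma~\ref{th:lemma2} and then exploiting the structural properties of a solution collected in Lemma~\ref{th:strongRelation1}. The term $2\Re\int_\Omega\epsr^{-1}(e_3\times\pa H/\pa x_3)\cdot\ov{\curl_\alpha H}\d x$ occurring on the right-hand side of~\eqref{eq:eqlemma2} is expressed through~\eqref{eq:eqlemma1}; substituting produces a raw identity valid for every $H\in H^2_{\mathrm{p}}(\Omega)^3$, containing in addition a volume term carrying $\div_\alpha H$, a $\Gamma_h$-term of the form $-2\Re\int_{\Gamma_h}(\pa H_3/\pa x_3-\div_\alpha H)\ov{H_3}\d s$, and a $\Gamma_0$-term $-2\Re\int_{\Gamma_0}\epsr^{-1}\ov{H_3}\div_{\alpha,T}H_T\d s$. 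For a solution I would now use $\div_\alpha H=0$ in $L^2(\Omega)$, which annihilates the divergence volume term and reduces the $\Gamma_h$-term to $-2\Re\int_{\Gamma_h}\ov{H_3}\,\pa H_3/\pa x_3\d s$, together with $e_3\cdot H=H_3=0$ on $\Gamma_0$, which kills the $\Gamma_0$-term. What remains is precisely the asserted right-hand side, so the algebra is bookkeeping once the two preceding lemmas are available.

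The one genuine issue is that Lemmas~\ref{th:lemma2} and~\ref{th:lemma1} are proved for $H\in H^2_{\mathrm{p}}(\Omega)^3$, whereas a solution of~\eqref{eq:weakProblem} lies a priori only in $H^1_{\mathrm{p},\mathrm{T}}(\Omega)^3$. I would therefore apply the combined identity to a sequence of approximants $H_n\in H^2_{\mathrm{p}}(\Omega)^3$ with $H_{n,3}=0$ on $\Gamma_0$ and pass to the limit. The convergence required is $H_n\to H$ in $H^1_{\mathrm{p}}(\Omega)^3$ together with $\curl_\alpha(\epsr^{-1}\curl_\alpha H_n)\to\curl_\alpha(\epsr^{-1}\curl_\alpha H)$ in $L^2(\Omega)^3$, the latter limit being well-defined and square-integrable by~\eqref{eq:eqOmega1}. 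Under this convergence the left-hand side and all quadratic volume integrals pass to the limit by continuity; the term carrying $\div_\alpha H_n$ tends to zero because $\div_\alpha H_n\to\div_\alpha H=0$ in $L^2(\Omega)$ while its $L^2$-cofactor stays bounded; and the $\Gamma_0$-integral vanishes identically along the sequence thanks to $H_{n,3}=0$ on $\Gamma_0$.

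Constructing such approximants is, I expect, the main obstacle. Near $\Gamma_h$ there is nothing to do: since $\epsr^{-1}\equiv1$ and the source vanishes for $x_3$ close to $h$, the field solves the constant-coefficient equation $(\Delta_\alpha+k^2)H=0$ there and is smooth, so one may take $H_n$ to coincide with $H$ in a neighbourhood of $\Gamma_h$ and thereby keep every $\Gamma_h$ boundary integral fixed along the sequence. The difficulty is to regularize in the interior and up to $\Gamma_0$ while keeping $\curl_\alpha(\epsr^{-1}\curl_\alpha H_n)$ convergent in $L^2(\Omega)^3$: since $\epsr^{-1}$ is only Lipschitz ($W^{1,\infty}$), smoothing does not commute with the second-order operator, and the resulting commutators have to be controlled. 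The natural tool here is Friedrichs' commutator lemma, which guarantees that commutators of multiplication by a $W^{1,\infty}$ coefficient with a mollification acting on first derivatives tend to zero in $L^2$. The interplay of this non-smooth coefficient with the perfectly conducting boundary condition $H_{n,3}=0$ on $\Gamma_0$ is what makes the approximation delicate, and carrying it out carefully is the heart of the proof.
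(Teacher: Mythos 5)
Your overall strategy coincides with the paper's: combine Lemma~\ref{th:lemma2} with Lemma~\ref{th:lemma1}, use the structural facts from Lemma~\ref{th:strongRelation1} ($\div_{\alpha}H=0$ in $\Omega$, $e_3\cdot H=0$ on $\Gamma_0$, $\div_\alpha H=0$ on $\Gamma_h$) to remove the extra terms, and justify everything for an $H^1$ solution by approximation with $H^2_{\mathrm{p}}(\Omega)^3$ fields; your description of how each term passes to the limit is also sound. But the step you explicitly defer --- the construction of the approximating sequence, which you yourself call ``the heart of the proof'' --- is precisely the content of the paper's proof, so as it stands your argument has a genuine gap at its decisive point.

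Moreover, the way you pose the approximation problem makes it harder than necessary. You insist on approximants satisfying $H_{n,3}=0$ on $\Gamma_0$, which turns the task into a constrained regularization at the perfectly conducting plate and is what drives you toward Friedrichs-type commutator estimates interacting with the boundary condition. The paper never imposes this constraint: it notes that $\epsr\equiv 1$ near $\Gamma_h$ gives $H\in H^2_{\mathrm{p}}((0,2\pi)^2\times(h-\eta,h))^3$ by elliptic regularity, extends $H$ to all of $\R^3$ (McLean's extension theorem plus cut-offs), and then simply mollifies, $H^\delta:=\phi^\delta * H$, with no boundary condition on $H^\delta$. The $\Gamma_0$-term $-2\Re\int_{\Gamma_0}\epsr^{-1}\ov{H^\delta_3}\,\div_{\alpha,T}H^\delta_T \d{s}$ is then \emph{not} zero along the sequence; instead it tends to zero because $e_3\cdot H^\delta\to e_3\cdot H=0$ in $H^{1/2}_{\mathrm{p}}(\Gamma_0)$ while $\div_{\alpha,T}H^\delta_T\to\div_{\alpha,T}H_T$ in $H^{-1/2}_{\mathrm{p}}(\Gamma_0)$; likewise the $\Gamma_h$-term converges to $-2\Re\int_{\Gamma_h}(\pa H_3/\pa x_3)\,\ov{H_3}\d{s}$ using the $H^2$ convergence near $\Gamma_h$ together with $\div_\alpha H=0$, and the volume term carrying $\div_\alpha H^\delta$ vanishes in the limit. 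In short, boundary terms are handled by trace convergence rather than by preserving boundary conditions under smoothing; this is the idea your proposal is missing, and it dissolves the ``delicate interplay'' you flag. (To your credit, you located the one point where the paper itself is terse: the claimed $L^2(\Omega)^3$-convergence of $\curl_{\alpha}(\epsr^{-1}\curl_{\alpha}H^\delta)$ is asserted from Lemma~\ref{th:strongRelation1} and the stated convergences of $H^\delta$ without a detailed commutator analysis; but in the unconstrained mollification setup this is a far milder issue than the constrained approximation you propose.)
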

\begin{proof}
It is sufficient to prove that $H$ satisfies~\eqref{eq:eqlemma2} and
\begin{multline}
\label{eq:secondClaim}
2\Re \int_{\Omega} \epsr^{-1} \bigg(e_3 \times \frac{\pa H}{\pa x_3}\bigg) \cdot\ov{\curl_{\alpha} H}\d{x} 
= 2 \int_{\Omega} \epsr^{-1} \bigg|\frac{\pa H}{\pa x_3}\bigg|^2\d{x} + 2 \Re \intO\nabla \epsr^{-1}\cdot\frac{\pa H}{\pa x_3}\ov{H_3}\d{x} \\ 
 -  2 \Re \int_{\Gamma_h}  \ov{H_3} \frac{\pa H_3}{\pa x_3}\d{s}.
\end{multline}

Recall that, for $h > \sup \{x_3: \, (x_1,x_2,x_3)^\top \in \supp(\epsr - 1)\}$, 
there exists a constant $0< \eta \ll 1$ such that $\epsr = 1$ in $(0,2\pi)^2\times(h-\eta,h)$. Hence, a solution 
$H \in H^1_{\mathrm{p}, \mathrm{T}}(\Omega)^3$ to the 
problem~\eqref{eq:weakProblem} belongs to $H^1_{\mathrm{p}, \mathrm{T}}(\Omega)^3 \cap H_{\mathrm{p}}^2((0,2\pi)^2\times(h-\eta,h))^3$ 
due to interior elliptic regularity theory. 
Then one can extend $H$ to a function defined in all of $\R^3$ that is $2\pi$-biperiodic
and belongs to $H_{\mathrm{p}}^1((0,2\pi)^2\times(-\infty,h))^3 \cap H_{\mathrm{p}}^2((0,2\pi)^2\times(h-\eta,\infty))^3$ (This can be seen 
using~\cite{McLea2000} combined with suitable cut-off arguments.)
By abuse of notation, we still denote the extended function by $H$. 
Let $\phi \in C^{\infty}(\R^3)$ be a smooth and non-negative function 
supported in the unit ball and $\int_{\R^3}\phi\d{x} = 1$. For 
$\delta>0$ and $x \in\R^3$ let $\phi^{\delta}(x) = \delta^{-3}\phi(x/\delta)$. 
The convolution $H^{\delta}:=\phi^{\delta}*H$ belongs to 
$H_{\mathrm{p}}^2(\Omega)^3$ and thus satisfies~\eqref{eq:eqlemma2}. 
Then, from Lemma~\ref{th:strongRelation1} and the fact that 
$H^{\delta} \rightarrow H$ in $H^1_{\mathrm{p}, \mathrm{T}}(\Omega)^3 
\cap H_{\mathrm{p}}^2((0,2\pi)^2\times(h-\eta,h))^3$ we obtain that
\begin{equation*}
  \curl_{\alpha}(\epsr^{-1}\curl_{\alpha} H^{\delta}) 
  \stackrel{\delta \to 0}{\rightarrow} 
  \curl_{\alpha}(\epsr^{-1}\curl_{\alpha} H)\quad \text{ in } L^2(\Omega)^3. 
\end{equation*}
Moreover, the convergence in $H_{\mathrm{p}}^2((0,2\pi)^2\times(h-\eta,h))^3$ 
implies that $\curl_{\alpha} H^{\delta} \rightarrow \curl_{\alpha} H$ in 
$L^2(\Gamma_h)^3$ as $\delta \to 0$. Consequently, $H$ satisfies~\eqref{eq:eqlemma2}.

It remains to show that $H$ also satisfies~\eqref{eq:secondClaim}. 
The function $H^{\delta}$ satisfies~\eqref{eq:eqlemma1} and we 
consider the limit of this identity as $\delta \rightarrow 0$. 
It is easily seen that $\div_{\alpha} H^{\delta} \rightarrow  \div_{\alpha} H = 0$ in $L^2(\Omega)$. 
Thus, we have
\[
  e_3\cdot H^{\delta} 
  \stackrel{\delta \to 0}{\rightarrow} 
  e_3\cdot H = 0 \quad \text{ in } H_{\mathrm{p}}^{{1/2}}(\Gamma_0), \qquad 
  \div_{\alpha,T} H_T^{\delta} 
  \stackrel{\delta \to 0}{\rightarrow} 
  \div_{\alpha,T} H_T \quad \text{ in } H_{\mathrm{p}}^{-1/2}(\Gamma_0),
\]
due to the convergence of $H^{\delta}$ to $H$ in $H_{\mathrm{p}}^1(\Omega)^3$. Further, the convergence of $H^{\delta}$ to $H$ in $H_{\mathrm{p}}^2((0,2\pi)^2\times(h-\eta,h))^3$ and the fact $\div_{\alpha} H = 0$ on $\Gamma_h$ imply that 
\[
\frac{\pa H^{\delta}}{\pa x_3} - \div_{\alpha} H^{\delta} \rightarrow \frac{\pa H_3}{\pa x_3} - \div_{\alpha} H = \frac{\pa H_3}{\pa x_3} \quad \text{ in }H_{\mathrm{p}}^{-1/2}(\Gamma_h).
\]
Plugging in these limits into~\eqref{eq:eqlemma1} shows that~\eqref{eq:secondClaim} holds.
\end{proof}

\section{Rellich Identity and Solution Estimate}
For establishing uniqueness of solution to the variational problem~\eqref{eq:weakProblem},
we derive in this section the so-called Rellich identity relating  $|\curl_{\alpha} H|^2$
and $|\pa H/\pa x_3|^2$ where $H$ is a solution to the homogeneous variational 
problem corresponding to~\eqref{eq:weakProblem}. Then, under suitable non-trapping
and smoothness conditions on the material parameter, integral inequality resulting 
from this identity allow us to obtain estimate for a solution to the homogeneous problem. 
As mentioned in the introduction, the Rellich identity and solution estimate obtained in
this section are much simpler than the ones in~\cite[Section 4]{Hadda2011}. It turns out also
that the non-trapping assumptions on the parameter material are weaker than the ones in 
the latter paper. 

The proof of the Rellich identity is based on an integration-by-parts technique 
that goes back to Rellich~\cite{Relli1940}. Typically, this technique requires 
more regularity of a solution than just to belong to the energy space. 
In our case we will roughly speaking multiply the Maxwell equations~\eqref{eq:eqOmega1}, 
for $G=0$ in the right hand side, by $x_3\pa H/\pa x_3$ and integrate by parts.

\begin{lemma}[Rellich Identity]
\label{th:rellichIdentity}
 Assume that $\epsr^{-1} \in W^{1,\infty}_{\mathrm{p}}(\Omega)$ is positive and bounded 
 away from zero. Then the following identity holds for all 
 solutions $H \in H^1_{\mathrm{p}, \mathrm{T}}(\Omega)^3$ 
 to the homogeneous problem corresponding to~\eqref{eq:weakProblem},
\begin{align}
\label{eq:volumetric} 
  \intO \left[ 2\epsr^{-1} \bigg|\frac{\pa H}{\pa x_3}\bigg|^2 
  - x_3 \frac{\pa \epsr^{-1}}{\pa x_3}|\curl_{\alpha} H|^2
  + 2 \Re \bigg( \nabla \epsr^{-1}\cdot\frac{\pa H}{\pa x_3} \ov{H_3} \bigg) \right] \d{x} \nonumber \\
  +\, \Re \int_{\Gamma_h} e_3\times(R_\alpha\times H)\cdot \ov{H}\d{s} - 2\Re \int_{\Gamma_h} T_\alpha(H_3) \ov{H_3}\d{s}  = 0.
\end{align}
\end{lemma}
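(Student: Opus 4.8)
The plan is to test the homogeneous Maxwell equation against the multiplier $x_3\,\pa H/\pa x_3$ — precisely the quantity analyzed in Lemma~\ref{th:lemma3} — and to combine the resulting integral identity with the energy balance obtained from the variational formulation itself. First I would invoke Lemma~\ref{th:strongRelation1}: since $G=0$, a solution $H$ of the homogeneous problem satisfies $\curl_\alpha(\epsr^{-1}\curl_\alpha H)=k^2 H$ in $L^2(\Omega)^3$, $\div_\alpha H=0$, the boundary conditions on $\Gamma_0$, and $\pa H/\pa x_3 = T_\alpha H$ on $\Gamma_h$. Substituting the strong equation into the left-hand side of Lemma~\ref{th:lemma3} turns its integrand into $2k^2\Re(x_3\,\pa H/\pa x_3\cdot\ov H)=k^2 x_3\,\pa|H|^2/\pa x_3$; integrating by parts in $x_3$ (the $\Gamma_0$ contribution vanishes because $x_3=0$ there, the lateral contributions cancel by periodicity) yields $k^2 h\int_{\Gamma_h}|H|^2\d{s} - k^2\intO|H|^2\d{x}$ for that left-hand side. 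After expanding $\pa(x_3\epsr^{-1})/\pa x_3 = \epsr^{-1}+x_3\,\pa\epsr^{-1}/\pa x_3$, Lemma~\ref{th:lemma3} then produces, besides the target volume integrand of~\eqref{eq:volumetric}, the spurious volume combination $+\intO\epsr^{-1}|\curl_\alpha H|^2\d{x}-k^2\intO|H|^2\d{x}$ together with boundary terms on $\Gamma_h$. The boundary term $-2\Re\int_{\Gamma_h}\ov{H_3}\,\pa H_3/\pa x_3\d{s}$ from Lemma~\ref{th:lemma3} is, thanks to $\pa H_3/\pa x_3 = T_\alpha H_3$, already exactly the term $-2\Re\int_{\Gamma_h}T_\alpha(H_3)\ov{H_3}\d{s}$ in~\eqref{eq:volumetric}.

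Second I would supply the missing relation by testing the homogeneous variational problem~\eqref{eq:weakProblem} with $F=H$. Using $\div_\alpha H=0$ and $R_\alpha\cdot H=0$ on $\Gamma_h$ from Lemma~\ref{th:strongRelation1}, this collapses to $\intO(\epsr^{-1}|\curl_\alpha H|^2-k^2|H|^2)\d{x}+\int_{\Gamma_h}e_3\times(R_\alpha\times H)\cdot\ov H\d{s}=0$. Its real part converts precisely the spurious volume combination $\intO\epsr^{-1}|\curl_\alpha H|^2\d{x}-k^2\intO|H|^2\d{x}$ into $-\Re\int_{\Gamma_h}e_3\times(R_\alpha\times H)\cdot\ov H\d{s}$, which is minus the flux term appearing in~\eqref{eq:volumetric}; combined with the $k^2 h\int_{\Gamma_h}|H|^2\d{s}$ piece above, everything except a residual boundary expression on $\Gamma_h$ cancels. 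The imaginary part is equally important: since the volume term is real, it forces $\Im\int_{\Gamma_h}e_3\times(R_\alpha\times H)\cdot\ov H\d{s}=0$.

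Finally — and this is the step I expect to be the main obstacle — one must show that the residual vanishes, i.e. that $k^2\int_{\Gamma_h}|H|^2\d{s}=\int_{\Gamma_h}|\curl_\alpha H|^2\d{s}+2\Re\int_{\Gamma_h}(\pa H_T/\pa x_3)\cdot(e_3\times\ov{\curl_\alpha H})\d{s}$. This is a pure Fourier/Parseval computation on $\Gamma_h$, where $\epsr^{-1}=1$: writing $H=\sum_n\hat H_n e^{in\cdot x}$ and using $\pa\hat H_n/\pa x_3=i\beta_n\hat H_n$ together with the divergence constraint $(n+\alpha,\beta_n)\cdot\hat H_n=0$, each mode reduces to an algebraic identity. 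The delicate point is that this per-mode identity holds only for the evanescent modes, where $\beta_n=i\sqrt{|n+\alpha|^2-k^2}$; for a propagating mode it would leave a surplus proportional to $\beta_n^2|\hat H_n|^2$. This is exactly where the imaginary part of the energy balance enters: a Fourier computation gives $\Im\int_{\Gamma_h}e_3\times(R_\alpha\times H)\cdot\ov H\d{s}=-4\pi^2\sum_{\beta_n\in\R}\beta_n|\hat H_n|^2$, so the vanishing of this flux forces $\hat H_n=0$ for every propagating mode. Hence only evanescent modes survive on $\Gamma_h$, the residual boundary identity then holds mode by mode, and~\eqref{eq:volumetric} follows. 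The careful bookkeeping of real versus imaginary parts and the cross-product algebra of $e_3\times\ov{\curl_\alpha H}$ paired against $\pa H_T/\pa x_3$ is the most error-prone portion of the argument.
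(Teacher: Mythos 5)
Your proposal is correct and follows essentially the same route as the paper's proof: multiply the strong equation by $x_3\,\pa H/\pa x_3$ via Lemma~\ref{th:lemma3}, cancel the spurious terms $\intO(\epsr^{-1}|\curl_\alpha H|^2 - k^2|H|^2)\d{x}$ with the real part of the energy balance $\mathcal{B}(H,H)=0$, and then handle the residual $\Gamma_h$-boundary term by a Fourier/Parseval computation using the divergence constraint together with the imaginary part of the energy balance, which kills all propagating modes. The only (immaterial) difference is the order of the last two steps: the paper first reduces the residual to $2\sum_{\beta_n\geq 0}\beta_n^2|\hat H_n|^2$ and then invokes $\Im\langle e_3\times(R_\alpha\times H),H\rangle_{\Gamma_h}=0$ to conclude, whereas you first eliminate the propagating modes and then verify the per-mode identity for the remaining evanescent ones.
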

\begin{proof}
Let $H \in H^1_{\mathrm{p}, \mathrm{T}}(\Omega)^3$ be a solution 
 to the homogeneous problem corresponding to~\eqref{eq:weakProblem}. First, using integration by parts we have
\[
  \Re \int_{\Gamma_h}  \frac{\pa H_T}{\pa x_3} \cdot(e_3 \times \ov{\curl_{\alpha} H})\d{s} 
= \int_{\Gamma_h} \bigg|\frac{\pa H_T}{\pa x_3}\bigg|^2\d{s} + \Re  \int_{\Gamma_h} \frac{\pa H_T}{\pa x_3} \cdot \ov{\nalT H_3}\d{s}.
\]
Note that $H$ satisfies the assumptions of Lemma~\ref{th:lemma3}. Together with the latter equation we obtain
\begin{multline*}
 2\Re\intO x_3\frac{\pa H}{\pa x_3}\cdot\ov{\curl_{\alpha}(\epsr^{-1}\curl_{\alpha} H)}\d{x} = 
- \intO \frac{\pa(x_3\epsr^{-1})}{\pa x_3}|\curl_{\alpha} H|^2\d{x} + h \int_{\Gamma_h}  |\curl_{\alpha} H|^2 \d{s}\\ 
+ 2 \int_{\Omega} \epsr^{-1} \bigg|\frac{\pa H}{\pa x_3}\bigg|^2\d{x} + 2 \Re \intO\nabla \epsr^{-1}\cdot\frac{\pa H}{\pa x_3}\ov{H_3}\d{x}   
-  2 \Re \int_{\Gamma_h} \frac{\pa H_3}{\pa x_3} \ov{H_3} \d{s} \\
- 2h \int_{\Gamma_h} \bigg|\frac{\pa H_T}{\pa x_3}\bigg|^2\d{s} + 2h\Re  \int_{\Gamma_h} \frac{\pa H_T}{\pa x_3} \cdot \ov{\nalT H_3}\d{s}.
\end{multline*}
We exploit that $H$ solves~\eqref{eq:eqOmega1} for $G=0$,
\begin{multline*}
2\Re\intO x_3\frac{\pa H}{\pa x_3}\cdot\ov{\curl_{\alpha}(\epsr^{-1}\curl_{\alpha} H)}\d{x} = k^2 2 \Re\intO x_3\frac{\pa \ov{H}}{\pa x_3}\cdot H \d{x} 
  = k^2 \intO x_3\frac{\pa |H|^2}{\pa x_3} \d{x}\\ 
  = -k^2 \intO|H|^2\d{x} + k^2 h \int_{\Gamma_h} |H|^2\d{s}. 
\end{multline*}
From the  last two equations we conclude that
\begin{multline*}
  - \intO \bigg(\frac{\pa(x_3\epsr^{-1})}{\pa x_3}|\curl_{\alpha} H|^2 - k^2|H|^2\bigg)\d{x} 
+ 2 \int_{\Omega} \epsr^{-1} \bigg|\frac{\pa H}{\pa x_3}\bigg|^2\d{x} + 2 \Re \intO\nabla \epsr^{-1}\cdot\frac{\pa H}{\pa x_3}\ov{H_3}\d{x}  \\
-  2 \Re \int_{\Gamma_h}  \ov{H_3} \frac{\pa H_3}{\pa x_3}\d{s}  - 2h \int_{\Gamma_h} \bigg|\frac{\pa H_T}{\pa x_3}\bigg|^2\d{s} 
+ 2h \Re \int_{\Gamma_h} \frac{\pa H_T}{\pa x_3} \cdot \ov{\nalT H_3}\d{s} \\
+ h \int_{\Gamma_h} ( |\curl_{\alpha} H|^2 - k^2|H|^2)\d{s} = 0. 
\end{multline*}
Due to the variational formulation~\eqref{eq:weakProblem} for $G = 0$,
\begin{equation}
  \label{eq:usefullVarForm}
 \intO (\epsr^{-1} |\curl_{\alpha} H|^2 - k^2 |H|^2)\d{x} + \Re \int_{\Gamma_h} e_3\times(R_\alpha\times H)\cdot \ov{H}\d{s} 
= 0 
\end{equation}
since $\div_\alpha H = 0$ in $\Omega$ and $R_\alpha \cdot H = 0$ in 
$H^{-1/2}_{\mathrm{p}}(\Gamma_h)$ due to Lemma~\ref{th:strongRelation1}.
Adding the last two equations yields that the term $\intO k^2 |H|^2\d{x}$ cancels,
and further exploiting $\partial H_3 / \partial x_3 = T_\alpha H_3$ on $\Gamma_h$ 
to yields that  
\begin{multline*}
- \intO x_3 \frac{\pa \epsr^{-1}}{\pa x_3}|\curl_{\alpha} H|^2\d{x}  
  + 2 \int_{\Omega} \epsr^{-1} \bigg|\frac{\pa H}{\pa x_3}\bigg|^2\d{x} 
  + 2 \Re \intO\nabla \epsr^{-1}\cdot\frac{\pa H}{\pa x_3}\ov{H_3}\d{x} \\ 
  - 2 \Re \int_{\Gamma_h}  T_\alpha(H_3) \ov{H_3} \d{s} 
+ \Re  \int_{\Gamma_h} e_3\times(R_\alpha\times H)\cdot \ov{H}\d{s}  
  + 2h \Re \int_{\Gamma_h} \frac{\pa H_T}{\pa x_3} \cdot \ov{\nalT H_3}\d{s}\\ 
  + h \int_{\Gamma_h} \bigg(|\curl_{\alpha} H|^2 - k^2|H|^2- 2 \bigg|\frac{\pa H_T}{\pa x_3}\bigg|^2 \bigg)\d{s} 
= 0.
\end{multline*}
Recall equality~\eqref{eq:equality1},
\begin{equation*}
 |\curl_{\alpha} H|^2 = |\curl_{\alpha,T} H|^2  + |\overrightarrow{\curl}_{\alpha,T} H_3|^2 
+ \bigg|\frac{\pa H_T}{\pa x_3}\bigg|^2 - 2 \Re\bigg(\frac{\pa H_T}{\pa x_3} \cdot \ov{\nalT H_3}\bigg).
\end{equation*}
Combining the last two equations yields 
\begin{align*} 
  L(H) =   h\int_{\Gamma_h} \bigg(\bigg|\frac{\pa H_T}{\pa x_3}\bigg|^2 + k^2|H|^2
    - |\curl_{\alpha,T} H|^2 - |\overrightarrow{\curl}_{\alpha,T} H_3|^2 \bigg)\d{s}
\end{align*}
where $L(H)$ is the left hand side of~\eqref{eq:volumetric}. It remains now to prove that the right hand
side of the latter equation vanishes.
First, we recall from Lemma~\ref{th:strongRelation1} that
$\partial H/\partial x_3 = T_\alpha H$ in $H^{-1/2}_\text{p}(\Gamma_h)$ which yields that
\[
 \int_{\Gamma_h}\bigg|\frac{\pa H_T}{\pa x_3}\bigg|^2 = \sum_{n\in\Lambda} |\beta_n \hat{H}_{T,n}|^2, \quad
\int_{\Gamma_h}\bigg|\frac{\pa H_3}{\pa x_3}\bigg|^2 = \sum_{n\in\Lambda} |\beta_n \hat{H}_{3,n}|^2.
\]
Using the latter formulas and replacing $k^2$ by $|n+\alpha|^2+\beta_n^2$ in the first boundary term 
in~\eqref{eq:volumetric} yields 
\begin{align}
\label{eq:boundary1}
  &\int_{\Gamma_h} \bigg(\bigg|\frac{\pa H_T}{\pa x_3}\bigg|^2 + k^2|H|^2
    - |\curl_{\alpha,T} H|^2 - |\overrightarrow{\curl}_{\alpha,T} H_3|^2 \bigg)\d{s} \nonumber\\
  &= \sum_{n\in\Lambda} \Big[ |\beta_n \hat{H}_{T,n}|^2 + (|n+\alpha|^2+\beta_n^2)(|\hat{H}_{T,n}|^2 + |\hat{H}_{3,n}|^2)
 - |(n+\alpha)\times\hat{H}_{T,n}|^2 - |n+\alpha|^2|\hat{H}_{3,n}| \Big]\nonumber\\
  &= \sum_{n\in\Lambda} \Big[ (\beta_n^2+|\beta_n|^2)|\hat{H}_{T,n}|^2 + |n+\alpha|^2|\hat{H}_{T,n}|^2
  - |(n+\alpha)\times\hat{H}_{T,n}|^2 + \beta_n^2|\hat{H}_{3,n}| \Big].
\end{align}
On the other hand, due to the divergence-free condition, we have
\begin{align*}
 \sum_{n\in\Lambda} \Big[ |n+\alpha|^2|\hat{H}_{T,n}|^2
  - |(n+\alpha)\times\hat{H}_{T,n}|^2 \Big] 
= \sum_{n\in\Lambda} |(n_1+\alpha_1)\hat{H}_{1,n} + (n_2+\alpha_2)\hat{H}_{2,n}|^2\\ 
= \|\div_{\alpha,T} H_T\|^2_{L^2(\Gamma_h)} 
= \|\pa H_3/\pa x_3\|^2_{L^2(\Gamma_h)} =  \sum_{n\in\Lambda} |\beta_n \hat{H}_{3,n}|^2.
\end{align*}
Now substituting the latter equation into~\eqref{eq:boundary1} leads to 
\begin{align}
\label{eq:boundary2}
  \int_{\Gamma_h} \bigg(\bigg|\frac{\pa H_T}{\pa x_3}\bigg|^2 + k^2|H|^2
    - |\curl_{\alpha,T} H|^2 - |\overrightarrow{\curl}_{\alpha,T} H_3|^2 \bigg)\d{s} 
  = 2\sum_{\beta_n \geq 0} \beta_n^2 |\hat{H}_n|^2,
\end{align}
where we exploited that $\beta_n$ is either a non-negative or a purely imaginary number. 
The proof is hence finished if we show that 
$\sum_{\beta_n \geq 0} \beta_n^2 |\hat{H}_n|^2 = 0$ (since then $L(H)=0$, 
which is the claim of the theorem). First, we compute that 
\begin{align*}
\langle e_3\times(R_\alpha\times H), H \rangle_{\Gamma_h}  &=  \sum_{n \in \Lambda} i(n+\alpha)\cdot \ov{\hat{H}_{T,n}}\hat{H}_{3,n} - \sum_{n \in \Lambda} i \beta_n|\hat{H}_{T,n}|^2  \\ 
&= -\sum_{n \in \Lambda} i \ov{\beta_n}|\hat{H}_{3,n}|^2 - \sum_{n \in \Lambda} i \beta_n|\hat{H}_{T,n}|^2.
\end{align*}
Since $\Re (\beta_n) \geq 0$ this implies that 
\begin{align}
\Im \langle e_3\times(R_\alpha\times H), H \rangle_{\Gamma_h} &= -\sum_{n \in\Lambda} \Re(\beta_n)|\hat{H}_n|^2 \leq 0, \text{ and} \label{eq:eq4} \\
\Re \langle e_3\times(R_\alpha\times H), H \rangle_{\Gamma_h} &= 
\sum_{n \in \Lambda} \Im(\ov{\beta_n})|\hat{H}_{3,n}|^2 + \sum_{n \in \Lambda}\Im(\beta_n) |\hat{H}_{T,n}|^2.  \label{eq:eq5}
\end{align}
(The second equation will be exploited later on.)
Taking the imaginary part of the variational formulation~\eqref{eq:weakProblem} 
with $G=0$ and $F=H$, and exploiting Lemma~\ref{th:strongRelation1}, we obtain that
\begin{equation*}
  0  = \Im\langle e_3\times(R_\alpha\times H),H \rangle_{\Gamma_h} 
  \stackrel{\eqref{eq:eq4}}{=} - \sum_{n \in\Lambda} \Re(\beta_n)|\hat{H}_n|^2.
\end{equation*}
This implies that $|\hat{H}_n|^2 = 0$ for all $n$ such that $\Re(\beta_n)>0$. 
Since $\beta_n$ is either purely imaginary or non-negative,  
we conclude that $\sum_{\beta_n \geq 0} \beta_n^2 |\hat{H}_n|^2 = 0$.
\end{proof}

The next Poincar\'{e}-like result is classical (see, e.g.,~\cite{Chand2005} for a proof).
\begin{lemma}
\label{th:poincare}
 For $u  \in \{v\in H^1_{\mathrm{p}}(\Omega): v|_{\Gamma_0} = 0\}$ there holds
 $2\|u\|^2_{L^2(\Omega)} \leq h^2 \| \pa u / \pa x_3 \|^2_{L^2(\Omega)}$. 
\end{lemma}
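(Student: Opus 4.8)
The plan is to reconstruct $u$ from its vertical derivative by the fundamental theorem of calculus, using that $u$ vanishes on the bottom face $\Gamma_0$. First I would reduce to smooth functions: since $\{v \in C^\infty(\ol{\Omega}): v|_{\Gamma_0} = 0\}$ is dense in $\{v \in H^1_{\mathrm{p}}(\Omega): v|_{\Gamma_0} = 0\}$ and both sides of the claimed inequality are continuous in the $H^1_{\mathrm{p}}(\Omega)$-norm, it suffices to establish the bound for smooth $u$ vanishing on $\Gamma_0$. Equivalently, one works directly with $H^1$ functions, invoking that by Fubini's theorem the map $x_3 \mapsto u(x_1,x_2,x_3)$ is, for almost every $(x_1,x_2) \in (0,2\pi)^2$, absolutely continuous on $(0,h)$ with $L^2$ derivative, and vanishes at $x_3 = 0$ by the trace condition.

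Next, for such $u$ and almost every $(x_1,x_2)$, I would write
\[
  u(x_1,x_2,x_3) = \int_0^{x_3} \frac{\pa u}{\pa x_3}(x_1,x_2,t) \d{t}, \qquad x_3 \in (0,h),
\]
and apply the Cauchy--Schwarz inequality in $t$ to obtain
\[
  |u(x_1,x_2,x_3)|^2 \leq x_3 \int_0^{x_3} \bigg| \frac{\pa u}{\pa x_3}(x_1,x_2,t)\bigg|^2 \d{t}
  \leq x_3 \int_0^h \bigg| \frac{\pa u}{\pa x_3}(x_1,x_2,t)\bigg|^2 \d{t}.
\]

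Integrating this estimate first in $x_3$ over $(0,h)$, where $\int_0^h x_3 \d{x_3} = h^2/2$, and then in $(x_1,x_2)$ over $(0,2\pi)^2$, yields
\[
  \|u\|_{L^2(\Omega)}^2 \leq \frac{h^2}{2} \bigg\| \frac{\pa u}{\pa x_3} \bigg\|_{L^2(\Omega)}^2,
\]
which is the assertion after multiplication by $2$. The only step requiring genuine care will be the justification of the one-dimensional integral representation for functions that are merely in $H^1_{\mathrm{p}}(\Omega)$; this rests on the standard fact that Sobolev functions possess representatives that are absolutely continuous along almost every line parallel to the $x_3$-axis, whence the $\Gamma_0$-trace agrees with the pointwise value of these restrictions at $x_3 = 0$. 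Beyond this routine measure-theoretic point I anticipate no difficulty.
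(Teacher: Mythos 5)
Your proof is correct: the representation $u(x_1,x_2,x_3)=\int_0^{x_3}\pa u/\pa x_3\,\mathrm{d}t$ for a.e.\ vertical line, Cauchy--Schwarz, and integration in $x_3$ (giving the factor $h^2/2$) yield exactly the stated constant, and the reduction to smooth functions (or the absolute-continuity-on-lines characterization of $H^1$) is a legitimate way to handle the trace condition on $\Gamma_0$. The paper itself offers no proof, merely calling the lemma classical and citing the rough-surface scattering literature, where precisely this one-dimensional integration argument is used, so your approach coincides with the intended one.
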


The following assumptions on $\epsr^{-1}$ will guarantee a stability estimate 
and a uniqueness statement for a solution to the variational problem~\eqref{eq:weakProblem}:
\begin{equation}
\label{eq:assumptions}
\begin{split}
& (a)\quad  \epsr^{-1} \in W^{1,\infty}_{\mathrm{p}}(\Omega) \text{ satisfies } \frac{\pa\epsr^{-1}}{\pa x_3} \leq 0 \text{ in } \Omega,\\
& (b) \quad \text{It holds that } \frac{\pa\epsr^{-1}}{\pa x_3} < 0 \text{ in a non-empty open ball } B \subset \Omega,\\
& (c) \quad\text{There exists } \delta>1/2 \text{ such that } \frac{\delta}{2} \|\naT \epsr^{-1}\|_{L^{\infty}(\Omega)^2}^2 + \frac{\sqrt{2}}{h}\bigg\| \frac{\pa \epsr^{-1}}{\pa x_3} \bigg\|_{L^\infty(\Omega)} < \frac{2}{h^2}.
\end{split}
\end{equation}

\begin{remark}
Note that~\eqref{eq:assumptions}(a) implies that $\epsr^{-1} \geq 1$, 
since, by construction, $\epsr^{-1}=1$ in $\{ h-\eta < x_3 < h\}$ for some small $\eta>0$. 
For the case of periodic non-absorbing structures, the main difference between these 
non-trapping conditions and the ones for the scalar case in~\cite{Bonne1994} is 
the additional condition~\eqref{eq:assumptions}(c). This condition arises from estimating the term 
$2 \Re \intO(\nabla \epsr^{-1}\cdot\pa H/\pa x_3 \ov{H_3})\d{x}$ in the Rellich 
identity~\eqref{eq:volumetric} using the Poincar\'{e}-like result above. This is 
natural since the Rellich identity resulting from a similar technique for
the scalar case~\cite{Bonne1994} does not have a corresponding term. 

Let us construct a function $\epsr^{-1}$ that satisfies the above assumptions~\eqref{eq:assumptions}. 
Choose constants $0<h_1<h_2<h$, $\lambda>0$, and a $C^1$-smooth cut-off function $\chi\in C^{1}((0,2\pi)^2)$ 
with compact support in $(0,2\pi)^2$ such that $0\leq \chi\leq 1$ and $\chi = 1$ in 
$(\pi/2,3\pi/2)^2$. For $x = (x_1,x_2,x_3)^\top\in \Omega$, we define
\[  \epsr^{-1}(x_1,x_2,x_3)= \begin{cases} 
              \lambda\chi(x_1,x_2) + 1 , & 0<x_3<h_1, \\ 
              \lambda\left(\frac{x_3-h_2}{h_1-h_2}\right)\chi(x_1,x_2) + 1, & h_1<x_3<h_2, \\
               1, & h_2<x_3<h.
            \end{cases}  \] 
Then $\epsr^{-1}$ is a decreasing function that satisfies~\eqref{eq:assumptions}(a), 
and condition~\eqref{eq:assumptions}(c) is satisfied 
when $\lambda>0$ is small enough. Moreover, 
$\epsr^{-1}$ also satisfies condition~\eqref{eq:assumptions}(b) in $(\pi/2,3\pi/2)^2 \times (h_1, h_2)$.
However, $\epsr^{-1}$ does not satisfy the corresponding conditions (7.2)(b,c) in~\cite{Hadda2011}, 
which require, roughly speaking, strict positivity of $\pa\epsr/\pa x_3$ in $(0,2\pi)^2 \times (h_1, h_2)$ 
(an arbitrary ball $B \subset \Omega$ as in~\eqref{eq:assumptions}(b) is not sufficient for the proof in \cite{Hadda2011}).
\end{remark}

\begin{lemma}
\label{th:estimatelemma}
Assume that $\epsr^{-1}$ satisfies the three assumptions in~\eqref{eq:assumptions}. Then there 
exists $C>0$ (independent of $k>0$) such that 
\begin{align*}
C \intO \bigg|\frac{\pa H}{\pa x_3}\bigg|^2\d{x} \leq \intO x_3\frac{\pa\epsr^{-1}}{\pa x_3}|\curl_{\alpha} H|^2\d{x} 
\end{align*}
for all solutions $H \in H_{\mathrm{p},\mathrm{T}}^1(\Omega)^3$ to the homogeneous problem corresponding to~\eqref{eq:weakProblem}.
\end{lemma}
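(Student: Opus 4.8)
The plan is to start from the Rellich identity established in Lemma~\ref{th:rellichIdentity}. Since we are dealing with a solution $H$ to the \emph{homogeneous} problem, the identity~\eqref{eq:volumetric} holds, and the goal is to extract a lower bound on $\intO |\pa H/\pa x_3|^2\d{x}$ in terms of $\intO x_3 (\pa\epsr^{-1}/\pa x_3)|\curl_\alpha H|^2\d{x}$. The first step is to dispose of the two boundary terms in~\eqref{eq:volumetric}, namely $\Re\int_{\Gamma_h} e_3\times(R_\alpha\times H)\cdot\ov{H}\d{s}$ and $-2\Re\int_{\Gamma_h}T_\alpha(H_3)\ov{H_3}\d{s}$. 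I expect these to be sign-definite once rewritten in Fourier series via $\pa H/\pa x_3 = T_\alpha H$ on $\Gamma_h$ (Lemma~\ref{th:strongRelation1}). In fact, the proof of Lemma~\ref{th:rellichIdentity} already showed that $\sum_{\beta_n\geq 0}\beta_n^2|\hat{H}_n|^2 = 0$, i.e.\ the propagating Fourier modes vanish, so only the evanescent modes ($\beta_n$ purely imaginary) survive; for those modes $\Re(\beta_n)=0$ and the boundary expressions~\eqref{eq:eq4}--\eqref{eq:eq5} collapse to contributions with the correct sign. I would use this to show that the sum of the two boundary integrals is nonnegative (or can be discarded to the correct side of the inequality).

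Once the boundary terms are controlled, the surviving volumetric identity reads, schematically,
\begin{equation*}
  \intO 2\epsr^{-1}\bigg|\frac{\pa H}{\pa x_3}\bigg|^2\d{x}
  + 2\Re\intO \nabla\epsr^{-1}\cdot\frac{\pa H}{\pa x_3}\ov{H_3}\d{x}
  \leq \intO x_3\frac{\pa\epsr^{-1}}{\pa x_3}|\curl_\alpha H|^2\d{x},
\end{equation*}
where I have moved the $-x_3(\pa\epsr^{-1}/\pa x_3)|\curl_\alpha H|^2$ term to the right-hand side (it has a favorable sign by~\eqref{eq:assumptions}(a)). The task is then to absorb the cross term $2\Re\intO\nabla\epsr^{-1}\cdot(\pa H/\pa x_3)\ov{H_3}\d{x}$ into the positive term $\intO 2\epsr^{-1}|\pa H/\pa x_3|^2\d{x}$, leaving a strictly positive multiple of $\intO|\pa H/\pa x_3|^2\d{x}$ on the left. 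This is precisely where assumption~\eqref{eq:assumptions}(c) enters: I would split $\nabla\epsr^{-1} = \naT\epsr^{-1} + (\pa\epsr^{-1}/\pa x_3)e_3$, estimate the transverse part using Cauchy--Schwarz together with the Poincar\'e inequality of Lemma~\ref{th:poincare} applied to $H_3$ (valid since $H_3|_{\Gamma_0}=0$ by~\eqref{eq:eqOmega2}), and estimate the $x_3$-derivative part of the cross term against $\|\pa\epsr^{-1}/\pa x_3\|_{L^\infty}$ times $\|\pa H/\pa x_3\|_{L^2}\|H_3\|_{L^2}$, again invoking Poincar\'e. The numerical constants $\delta/2$, $\sqrt{2}/h$, and $2/h^2$ in~\eqref{eq:assumptions}(c) are exactly calibrated so that, after applying a weighted Young's inequality (with weight governed by $\delta$), the cross term is dominated by a fraction of $\intO 2\epsr^{-1}|\pa H/\pa x_3|^2\d{x} \geq \intO 2|\pa H/\pa x_3|^2\d{x}$ (using $\epsr^{-1}\geq 1$), leaving a positive remainder $C\intO|\pa H/\pa x_3|^2\d{x}$.

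The main obstacle I anticipate is bookkeeping the constants so that condition~\eqref{eq:assumptions}(c) yields a strictly positive $C$ that is \emph{independent of $k$}. The $k$-independence is crucial for the subsequent all-frequencies uniqueness argument, and it hinges on the fact that $k$ appeared only through terms that have already cancelled in the derivation of the Rellich identity~\eqref{eq:volumetric}; the remaining estimate is purely geometric. I would need to carefully track the Young's inequality parameter: to handle $2\Re\intO\naT\epsr^{-1}\cdot(\pa H_T/\pa x_3)\ov{H_3}\d{x}$ I split using a free parameter, bound $|\ov{H_3}|$-factors by $\|H_3\|_{L^2}\leq (h/\sqrt{2})\|\pa H_3/\pa x_3\|_{L^2}$ via Lemma~\ref{th:poincare}, and choose the parameter so the threshold in~\eqref{eq:assumptions}(c) with the specific value $\delta$ produces a positive coefficient. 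The strict inequality in~\eqref{eq:assumptions}(c) is what guarantees $C>0$ rather than merely $C\geq 0$; the role of condition~\eqref{eq:assumptions}(b) is not needed here but will be used in the next step (combining this estimate with the sign condition to force $H\equiv 0$), so I would not invoke it in this lemma.
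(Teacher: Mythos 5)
Your proposal is correct and follows essentially the same route as the paper: discard the two $\Gamma_h$ boundary terms in the Rellich identity by showing their sum equals $\sum_{n\in\Lambda}\Im(\beta_n)|\hat{H}_n|^2\geq 0$, then bound the remaining volumetric expression from below via Young's inequality with a free parameter, the Poincar\'e bound $\|H_3\|_{L^2}\leq (h/\sqrt{2})\|\pa H_3/\pa x_3\|_{L^2}$, and $\epsr^{-1}\geq 1$, with assumption~\eqref{eq:assumptions}(c) delivering a strictly positive, $k$-independent constant. Your invocation of the vanishing of the propagating modes is not needed for the sign of the boundary terms (the paper gets nonnegativity directly from $\Im(\beta_n)\geq 0$ after combining the two terms), but it is harmless and the rest of your plan, including the observation that~\eqref{eq:assumptions}(b) is reserved for the uniqueness theorem, matches the paper's proof.
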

\begin{proof}
We first estimate the two boundary terms in~\eqref{eq:volumetric}. We find that 
\begin{equation*}
-2\Re \int_{\Gamma_h}T_\alpha(H_3)\overline{H_3}\d{s} = 2 \sum_{n\in\Lambda} \Im(\beta_n)|\hat{H}_{3,n}|^2 \geq 0.
\end{equation*}
Together with~\eqref{eq:eq5} we obtain
\[
 \Re \langle e_3\times(R_\alpha\times H), H \rangle_{\Gamma_h} -2\Re \int_{\Gamma_h}T_\alpha(H_3)\overline{H_3}\d{s} 
=  \sum_{n\in\Lambda} \Im(\beta_n)|\hat{H}_n|^2 \geq 0. 
\]
Therefore, from the Rellich identity~\eqref{eq:volumetric} we deduce $V(H) \leq 0$ 
where $V(H)$ is the volumetric terms in~\eqref{eq:volumetric}. 
We need now to bound $V(H)$ from below,
\begin{multline*}
  V(H) = \intO \left[ 2\epsr^{-1} \bigg|\frac{\pa H}{\pa x_3}\bigg|^2 
   - x_3 \frac{\pa \epsr^{-1}}{\pa x_3}|\curl_{\alpha} H|^2
   +2 \Re \bigg( \naT \epsr^{-1}\cdot\frac{\pa H_T}{\pa x_3}\ov{H_3}
   + \frac{\pa \epsr^{-1}}{\pa x_3}\frac{\pa H_3}{\pa x_3} \ov{H_3}\bigg) \right] \d{x}\\
 \geq  \intO \left[ 2 \bigg|\frac{\pa H}{\pa x_3}\bigg|^2
   - x_3\frac{\pa \epsr^{-1}}{\pa x_3} |\curl_{\alpha} H|^2\d{x}\right]\d{x} 
- \gamma^{-1} \left\| \frac{\pa H_3}{\pa x_3} \right\|_{L^2(\Omega)}^2
   - \gamma \left\| \frac{\pa \epsr^{-1}}{\pa x_3} \right\|^2_{L^\infty(\Omega)}\left\|H_3 \right\|_{L^2(\Omega)}^2\\
 - \delta \|\naT \epsr^{-1}\|_{L^{\infty}(\Omega)^2}^2 
   \| H_3\|_{L^2(\Omega)}^2 
 - \delta^{-1} \bigg\|\frac{\pa H_T}{\pa x_3}\bigg\|_{L^2(\Omega)^2}^2 
\end{multline*}
for arbitrary $\delta, \,\gamma>0$. Poincar\'e's inequality from 
Lemma~\ref{th:poincare} and the binomial formula imply that
\begin{multline*}
  V(H) \geq  \intO \left[ \big(2- \frac{\delta h^2}{2} \|\naT \epsr^{-1}\|_{L^{\infty}(\Omega)^2}^2 \big) 
   \bigg|\frac{\pa H_3}{\pa x_3}\bigg|^2
   + \frac{2\delta-1}{\delta} \bigg|\frac{\pa H_T}{\pa x_3}\bigg|^2 
   - x_3\frac{\pa \epsr^{-1}}{\pa x_3} |\curl_{\alpha} H|^2\d{x}\right]\d{x} \\
- \gamma^{-1} \left\| \frac{\pa H_3}{\pa x_3} \right\|_{L^2(\Omega)}^2
   - \gamma \left\| \frac{\pa \epsr^{-1}}{\pa x_3} \right\|^2_{L^\infty(\Omega)}\left\|H_3 \right\|_{L^2(\Omega)}^2
\end{multline*}
Again, we exploit Poincar\'e's inequality, to find that 
\[
   \gamma^{-1} \left\| \frac{\pa H_3}{\pa x_3} \right\|_{L^2(\Omega)}^2
   + \gamma \left\| \frac{\pa \epsr^{-1}}{\pa x_3} \right\|^2_{L^\infty(\Omega)}\left\|H_3 \right\|_{L^2(\Omega)}^2
  \leq \left( \gamma^{-1} + \gamma \frac{h^2}{2} \left\| \frac{\pa \epsr^{-1}}{\pa x_3} \right\|^2_{L^\infty(\Omega)} \right)
    \left\| \frac{\pa H_3}{\pa x_3} \right\|_{L^2(\Omega)}^2.
\]
The minimum of $\gamma \mapsto \gamma^{-1} + C \gamma$ is $2\sqrt{C}$. In consequence,  
\begin{multline*}
   V(H) \geq \left[ 2 -  \frac{\delta h^2}{2} \|\naT \epsr^{-1}\|_{L^{\infty}(\Omega)^2}^2 
   - \sqrt{2}h \left\| \frac{\pa \epsr^{-1}}{\pa x_3} \right\|_{L^\infty(\Omega)}\right] \intO \bigg|\frac{\pa H_3}{\pa x_3}\bigg|^2 \d{x} \\
   + \frac{2\delta-1}{\delta} \intO \bigg|\frac{\pa H_T}{\pa x_3}\bigg|^2 \d{x}
   - \intO x_3 \frac{\pa \epsr^{-1}}{\pa x_3} |\curl_{\alpha} H|^2 \d{x}.
\end{multline*}
Finally, assumption~\eqref{eq:assumptions}(c) implies that there exists 
$\delta>1/2$ such that the first bracket on the right-hand side is positive.
\end{proof}

\section{Uniqueness of Solution for All Wave Numbers}

In this section, we prove our main uniqueness result for the 
electromagnetic scattering problem~\eqref{eq:weakProblem}, 
under the assumption that $\epsr$ satisfies~\eqref{eq:assumptions}.
As mentioned above, corresponding uniqueness results that hold 
for all wave numbers currently exist, to the best of our 
know\-ledge, only for absorbing materials, see~\cite{Schmi2003}, 
or simpler two-dimensional structures, see~\cite{Bonne1994}. 

\begin{theorem}
  \label{th:uniqueness}
  Assume that $\epsr^{-1}$ satisfies the assumptions~\eqref{eq:assumptions}. 
  Then problem~\eqref{eq:weakProblem} is uniquely solvable for all right-hand 
  sides $G \in H^1_{\mathrm{p}}(\Omega)$ and for all wave numbers $k>0$.
\end{theorem}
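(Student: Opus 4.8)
The plan is to establish uniqueness by showing that any solution $H$ to the \emph{homogeneous} problem (i.e. with $G=0$) must vanish identically; since problem~\eqref{eq:weakProblem} satisfies the Fredholm alternative, uniqueness then immediately yields existence for all right-hand sides and all $k>0$. So let $H \in H^1_{\mathrm{p},\mathrm{T}}(\Omega)^3$ be an arbitrary solution to the homogeneous problem. The engine of the argument is the chain of inequalities already assembled in the previous section, which I would invoke essentially verbatim.

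First I would apply Lemma~\ref{th:estimatelemma}: under assumptions~\eqref{eq:assumptions} there is a constant $C>0$, independent of $k$, with
\begin{align*}
  C \intO \bigg|\frac{\pa H}{\pa x_3}\bigg|^2\d{x} \leq \intO x_3\frac{\pa\epsr^{-1}}{\pa x_3}|\curl_{\alpha} H|^2\d{x}.
\end{align*}
Now I would use the sign condition~\eqref{eq:assumptions}(a), namely $\pa\epsr^{-1}/\pa x_3 \leq 0$ in $\Omega$, together with $x_3\geq 0$, to conclude that the right-hand side is $\leq 0$. Since the left-hand side is manifestly $\geq 0$, both sides must equal zero. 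This forces $\pa H/\pa x_3 = 0$ in $L^2(\Omega)^3$, and simultaneously forces the boundary/volume quantity on the right to vanish, i.e. $\int_\Omega x_3 (\pa\epsr^{-1}/\pa x_3)|\curl_\alpha H|^2\d{x}=0$.

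Next I would propagate $\pa H/\pa x_3 = 0$ through the problem. Because $H\in H^1_{\mathrm{p},\mathrm{T}}(\Omega)^3$ is periodic in $x_1,x_2$ and has $H_3=0$ on $\Gamma_0$, a function independent of $x_3$ with vanishing trace on $\Gamma_0$ for its third component is severely constrained; in fact Poincar\'e's inequality (Lemma~\ref{th:poincare}) applied to $H_3$ already gives $\|H_3\|_{L^2(\Omega)} \leq (h/\sqrt 2)\|\pa H_3/\pa x_3\|_{L^2(\Omega)}=0$, so $H_3\equiv 0$. For the transverse components I would combine $\pa H_T/\pa x_3=0$ with the outgoing radiation behaviour: since $\pa H/\pa x_3 = T_\alpha H$ on $\Gamma_h$ by Lemma~\ref{th:strongRelation1}, the vanishing of $\pa H/\pa x_3$ yields $T_\alpha H=0$ on $\Gamma_h$, hence $\beta_n\hat{H}_n=0$ for every $n$, so all Fourier modes with $\beta_n\neq 0$ have $\hat H_n=0$; the remaining (finitely many) cutoff modes with $\beta_n=0$ are then $x_3$-independent and must match the now-known boundary data, leaving $H$ zero in $\{x_3>h\}$ and, by the Rayleigh expansion, identically so above $\Gamma_h$.

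The one genuinely delicate point—and the step I expect to be the main obstacle—is showing that $H$ then vanishes throughout $\Omega$ and not merely above the structure. Here I would bring in condition~\eqref{eq:assumptions}(b): on the open ball $B$ we have the \emph{strict} inequality $\pa\epsr^{-1}/\pa x_3<0$, so the already-established identity $\int_\Omega x_3(\pa\epsr^{-1}/\pa x_3)|\curl_\alpha H|^2\d{x}=0$ forces $\curl_\alpha H=0$ a.e.\ on $B$ (the integrand is pointwise $\leq 0$ and strictly negative weight times $|\curl_\alpha H|^2$). Combined with $\div_\alpha H=0$ and the homogeneous equation $\curl_\alpha(\epsr^{-1}\curl_\alpha H)=k^2 H$ from Lemma~\ref{th:strongRelation1}, this makes $H$ satisfy a second-order elliptic system to which a unique continuation principle applies. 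Since $H$ already vanishes on the nonempty open set $\{x_3>h\}$, unique continuation propagates the zero set across all of the connected domain, giving $H\equiv 0$ in $\Omega$. I would cite the relevant unique-continuation result for $\curl$-$\curl$ systems with Lipschitz coefficient $\epsr^{-1}\in W^{1,\infty}_{\mathrm{p}}(\Omega)$ to justify this final propagation; verifying that the regularity of $\epsr^{-1}$ suffices for such a theorem is the part requiring the most care.
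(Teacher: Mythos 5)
Your first half is exactly the paper's argument: Lemma~\ref{th:estimatelemma} plus the sign condition~\eqref{eq:assumptions}(a) forces both sides of the estimate to vanish, giving $\pa H/\pa x_3=0$ in $\Omega$ and, via the strict inequality in~\eqref{eq:assumptions}(b), $\curl_{\alpha}H=0$ a.e.\ in $B$. The genuine gap is in your anchor set for unique continuation. You claim that $T_\alpha H=\pa H/\pa x_3=0$ on $\Gamma_h$ makes $H$ vanish in $\{x_3>h\}$, but $T_\alpha H=0$ only yields $\beta_n\hat H_n=0$, i.e.\ $\hat H_n=0$ for the modes with $\beta_n\neq 0$. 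At Rayleigh--Wood frequencies there are indices $n$ with $|n+\alpha|^2=k^2$, i.e.\ $\beta_n=0$, and for these the Rayleigh expansion contributes the $x_3$-independent field $\hat H_{T,n}e^{in\cdot x}$ above $\Gamma_h$, which solves $(\Delta_\alpha+k^2)H=0$ there and is \emph{not} excluded by anything you have established: $H_3\equiv 0$ together with $R_\alpha\cdot H=0$ only gives $(n+\alpha)\cdot\hat H_{T,n}=0$, not $\hat H_{T,n}=0$, and the phrase ``must match the now-known boundary data'' does not supply the missing argument. So as written your proof covers only wave numbers that are not Wood anomalies, whereas the entire point of the theorem is uniqueness for \emph{all} $k>0$. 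The fix is already contained in your own setup and is what the paper does: in $B$ one has $\epsr^{-1}\curl_\alpha H=0$, so equation~\eqref{eq:eqOmega1} with $G=0$ gives $k^2H=\curl_\alpha(\epsr^{-1}\curl_\alpha H)=0$ in $B$, hence $H=0$ on the open ball $B$; anchor the unique continuation there instead of at $\{x_3>h\}$.

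The second, lesser issue is the unique continuation principle you invoke, a UCP for $\curl$--$\curl$ systems with merely Lipschitz coefficients, which you correctly flag as the delicate citation. The paper avoids this entirely by a dimension reduction: since $\pa H/\pa x_3=0$, the field is determined by its trace on a slice $\Gamma_{h-\eta}$ lying in the region where $\epsr\equiv 1$; there each component satisfies the constant-coefficient two-dimensional equation $\Delta_2 H_j+2i\alpha\cdot\nabla_T H_j+(k^2-|\alpha|^2)H_j=0$, vanishes on the (open, nonempty) shadow of $B$, and the elementary planar UCP of Theorem~\ref{th:Kirsch} concludes. If you prefer to stay in three dimensions, your route can be made rigorous without a Maxwell-specific UCP: using $\div_\alpha H=0$ one rewrites~\eqref{eq:eqOmega1} as $\Delta_\alpha H=\epsr\left(\nabla\epsr^{-1}\times\curl_\alpha H-k^2H\right)$, so that $|\Delta H|\leq C(|H|+|\nabla H|)$ pointwise with $C$ depending on $\|\epsr\|_{L^\infty}$, $\|\nabla\epsr^{-1}\|_{L^\infty}$, $k$, $|\alpha|$; interior elliptic regularity gives $H\in H^2_{\mathrm{loc}}(\Omega)^3$, and then the three-dimensional analogue of Theorem~\ref{th:Kirsch} applies, anchored at $B$. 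Either way, the Wood-anomaly modes never need to be controlled, which is precisely the advantage of anchoring at $B$ rather than above $\Gamma_h$.
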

\begin{proof}
Consider a solution $H \in H^1_{\mathrm{p},\mathrm{T}}(\Omega)^3$ to the 
homogeneous problem corresponding  to~\eqref{eq:weakProblem}. 
Due to Lemma~\ref{th:estimatelemma} and the assumptions on $\epsr^{-1}$ we obtain that
 $\pa H / \pa x_3 = 0$ in $\Omega$ and $\curl_{\alpha} H = 0$ in the ball $B$
(see assumption~\eqref{eq:assumptions}(b)). Equation~\eqref{eq:eqOmega1} implies that 
$H$ vanishes in $B$, too. 

Since $H$ is independent of $x_3$, it is sufficient to show that $H$ vanishes on 
$\Gamma_{h-\eta} = \{(x_1,x_2,x_3) \in \Omega: x_3 = h-\eta \}$ for some (small) $\eta>0$
to conclude that $H$ vanishes entirely in $\Omega$.
If $\eta$ is small enough, then all three components $H_j$, $j=1,2,3$, satisfy 
\[
  \Delta_{\alpha} H_j + k^2 H_j = 0, \qquad \Delta_{\alpha} H_j := \Delta H_j + 2i \alpha \cdot \nabla H_j - |\alpha|^2 H_j, 
\]
in some neighborhood of $\Gamma_{h-\eta}$. 
Let us denote by $\Delta_2 = \partial^2  / \partial x_1^2 + \partial^2  / \partial x_2^2$ the 
two-dimensional Laplacian. Since $\partial^2 H_j  / \partial x_3^2$ vanishes, 
$\left. H_j \right|_{\Gamma_{h-\eta}} \in H^1_{\mathrm{p}}(\Gamma_{h-\eta})$ 
is a weak solution to the two-dimensional equation  
\[
  \Delta_2 H_j + 2i \alpha \cdot \nabla_T H_j + (k^2 - |\alpha|^2) H_j = 0 \quad \text{on } \Gamma_{h - \eta}, \quad j=1,2,3.
\]
Standard elliptic regularity results imply that $\left. H_j \right|_{\Gamma_{h-\eta}}$ belongs to 
$H^2_{\mathrm{p}}(\Gamma_{h-\eta})$. Moreover, since $H$ vanishes in the open 
ball $B$ and since $H$ is independent of $x_3$, $H_j$ 
vanish in a non-empty relatively open subset of $\Gamma_{h - \eta}$. 

In this situation, the unique continuation principle stated in Theorem~\ref{th:Kirsch} 
(see, e.g.,~\cite{Monk2003a}) implies that $H_{j}$ vanishes on $\Gamma_{h-\eta}$ for $j=1,2,3$,  
and hence $H$ vanishes in $\Omega$.
\end{proof}

\begin{theorem}
\label{th:Kirsch}
 Let $\O$ be an open and simply connected set in $\R^2$, 
 and let $u_1,...,u_m \in H^2(\O)$ be real-valued such that
\begin{equation}
 |\Delta u_j| \leq C \sum_{l = 1}^m (|u_l| + |\nabla u_l|) \text{ in } \O \text{ for } j=1,...,m.
\end{equation}
If $u_j$ vanishes in some open and non-empty subset of $\O$ for all $j = 1,...,m$, then $u_j$ vanish identically in $\O$ for all $j = 1,...,m$.
\end{theorem}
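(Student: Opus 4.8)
The plan is to prove this by a Carleman estimate for the Laplacian together with a connectedness argument; the essential observation is that although the inequality for $u_j$ involves all of the $u_l$ on the right-hand side, this coupling is harmless once the Carleman weight parameter is taken large.

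First I would recast the global conclusion as a local continuation statement. Let $S\subseteq\O$ be the interior of the common zero set, i.e.\ the set of points near which every $u_j$ vanishes. By hypothesis $S\neq\emptyset$, and $S$ is open by construction; since $\O$ is connected it suffices to show that $S$ is relatively closed in $\O$. If this fails there is a point $x_0\in\O\cap\partial S$. Picking $y\in S$ close to $x_0$ and a ball $B(y,\rho)\subset S$ with $x_0\in\partial B(y,\rho)$, all $u_j$ vanish on $B(y,\rho)$, and I would derive a contradiction by showing that they must then vanish in a full neighborhood of $x_0$, so that $x_0\in S$. This is the classical assertion that a solution of the differential inequality which vanishes on one side of a smooth hypersurface (here the sphere $\partial B(y,\rho)$) must vanish in a two-sided neighborhood.

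The engine of the argument is a Carleman estimate. I would fix a real weight $\varphi$ with non-vanishing gradient near $x_0$ and strictly convex level sets — for the Laplacian any such weight can be rendered strongly pseudoconvex by replacing it with $e^{\lambda\varphi}$ for $\lambda$ large — and establish that there exist $C>0$ and $\tau_0>0$ with
\[
  \tau^3\int e^{2\tau\varphi}|v|^2\d{x} + \tau\int e^{2\tau\varphi}|\nabla v|^2\d{x}
  \leq C\int e^{2\tau\varphi}|\Delta v|^2\d{x}
\]
for all $v\in C_0^\infty$ supported near $x_0$ and all $\tau\geq\tau_0$. This is standard: one conjugates $\Delta$ by $e^{\tau\varphi}$, decomposes the conjugated operator into its formally self-adjoint and skew-adjoint parts, and integrates by parts, the convexity of the level sets producing the positive commutator that supplies the powers of $\tau$ on the left.

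To conclude I would combine the estimate with a cutoff $\chi$ equal to one on a small ball around $x_0$ and supported in a slightly larger one, applying the estimate to $v=\chi u_j$. Since $\Delta(\chi u_j)=\chi\,\Delta u_j+2\nabla\chi\cdot\nabla u_j+(\Delta\chi)u_j$, on $\{\chi=1\}$ the first term is bounded by $C\sum_l(|u_l|+|\nabla u_l|)$ through the hypothesis, while the remaining commutator terms are supported in the annulus $\{0<\chi<1\}$. Summing the resulting inequalities over $j=1,\dots,m$, the coupled right-hand side $\sum_l\int e^{2\tau\varphi}(|u_l|^2+|\nabla u_l|^2)$ is absorbed into the $\tau^3$- and $\tau$-weighted terms on the left once $\tau$ is large, and where the $u_l$ already vanish it contributes nothing. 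Arranging $\varphi$ so that its values on the region where vanishing is to be deduced strictly exceed its values on the commutator annulus, and letting $\tau\to\infty$, then forces every $u_j$ to vanish in a full neighborhood of $x_0$, contradicting $x_0\in\partial S$. I expect the main obstacle to be exactly this final geometric bookkeeping — choosing $\varphi$, positioning the cutoff, and using the one-sided vanishing so that the commutator contributions sit at a strictly lower weight level than the region in which vanishing is concluded, which is where the strong pseudoconvexity of $\varphi$ must be matched to the geometry of $\partial B(y,\rho)$.
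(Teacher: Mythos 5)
The paper never proves this theorem: it is stated as a known unique continuation principle and imported from the literature (the citation to~\cite{Monk2003a} in the proof of Theorem~\ref{th:uniqueness}), so there is no internal proof to compare yours against. Your Carleman-estimate strategy is in fact the standard route taken in the cited sources, and as an outline it is sound: the reduction by connectedness to showing the common interior zero set $S$ is relatively closed, the ball-touching argument to localize at a boundary point, the conjugated-operator Carleman inequality with weights $\tau^3$ and $\tau$, and the absorption of the coupled right-hand side $\sum_l(|u_l|+|\nabla u_l|)$ for large $\tau$ are all the correct ingredients, and you rightly identify the weight/cutoff geometry as the real technical content. Three small points deserve care if you were to write this out. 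First, you cannot in general arrange the touching point of $\partial B(y,\rho)$ to be the originally chosen $x_0\in\partial S$; the standard fix is to take $y\in S$ with $\mathrm{dist}(y,\partial S)<\mathrm{dist}(y,\partial\O)$, let $\rho=\mathrm{dist}(y,\partial S)$, and run the contradiction at whatever point $z\in\partial B(y,\rho)\cap\partial S$ the ball touches. Second, the term $\chi\,\Delta u_j$ is supported on all of $\mathrm{supp}\,\chi$, not only on $\{\chi=1\}$; its contribution over the annulus $\{0<\chi<1\}$ intersected with $\{u\neq 0\}$ must be handled exactly like the commutator terms, i.e.\ by the strict weight gap, and your final paragraph implicitly covers this but the bookkeeping should make it explicit. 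Third, the Carleman estimate is stated for $v\in C_0^\infty$ while $u_j\in H^2(\O)$; a routine mollification/density argument extends it to compactly supported $H^2$ functions, which is needed since the hypothesis gives no more regularity than $H^2$.
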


\bibliographystyle{siam}
\bibliography{ip-biblio}

\end{document}